\newtheorem{remark}[theorem]{Remark}
\newcommand{\hf}{\frac{1}{2}}
\title{Discontinuous Galerkin method for fractional convection-diffusion equations}
\author{Q. Xu\thanks{School of Mathematics and Statistics, Central South University,410083 Changsha, China;Division of Applied Mathematics, Brown University,  RI, 02912 USA. Email: {\tt qw\_xu@hotmail.com}}
        \and  J. S. Hesthaven\thanks{Division of Applied Mathematics, Brown University,  RI, 02912 USA. Email: {\tt Jan.Hesthaven@Brown.edu}}}
\begin{document}

\maketitle

\begin{abstract}
We propose a discontinuous Galerkin method for convection-subdiffusion equations with a fractional operator of order $\alpha  (1<\alpha<2)$ defined through the fractional Laplacian. The fractional operator of order $\alpha$ is expressed as a composite of first order derivatives and fractional integrals of order $2-\alpha$, and the fractional convection-diffusion problem is expressed as a system of low order differential/integral equations and a local discontinuous Galerkin method scheme is derived for the equations. We prove stability and optimal order of convergence ${\cal O}(h^{k+1})$ for subdiffusion, and an order of convergence of ${\cal O}(h^{k+\hf})$ is established for the general fractional convection-diffusion problem. The analysis is confirmed by numerical examples.
\end{abstract}

\begin{keywords}
fractional convection-diffusion equation, fractional Laplacian, fractional Burgers equation, discontinuous Galerkin method,  stability, optimal convergence
\end{keywords}

\begin{AMS}
26A33, 35R11, 65M60, 65M12.
\end{AMS}

\pagestyle{myheadings}
\thispagestyle{plain}
\markboth{Q. Xu and J. S. Hesthaven}{Discontinuous Galerkin method for fractional convection-diffusion equations}

\section{Introduction}
In this paper we consider the following convection diffusion equation

\begin{eqnarray}\label{problem}
\left\{
\begin{array}{ll}
\frac{\partial u(x,t)}{\partial t} + \frac{\partial}{\partial x}f(u)  = \varepsilon\left(-(-\Delta)^{\alpha/2}\right)u(x,t),\quad x\in \mathbb{R}, t\in (0,T],\\ 
u(x,0)=u_0(x), \quad x\in \mathbb{R} 
\end{array}\right.
\end{eqnarray}
where $f$ is assumed to be Lipschitz continuous and
the subdiffusion is defined through the fractional Laplacian $-(-\Delta)^{\alpha/2}(
\alpha \in(0,2])$, which can be defined using Fourier analysis
as\cite{fraclap1,fraclap2,fracNS1,fracNS2}, 

\begin{equation}
-\widehat{(-\Delta)^{\alpha/2}}u(\xi) = (2\pi)^\alpha|\xi|^\alpha\hat{u}(\xi)
\end{equation}
Equivalently, the fractional Laplacian can also be defined by a singular integral\cite{thry2,fraclap2},
\begin{equation}\label{intdef}
-(-\Delta)^{\frac{\alpha}{2}}u(x) = c_\alpha\int_{|z|>0}\frac{u(x+z)-u(x)}{|z|^{1+\alpha}}dz
\end{equation}
Equation \eqref{problem}, also known as a fractional conservation law, can be viewed as a generalization of the classical convection diffusion equation. During the last decade, it has arisen as a suitable model in many application areas,  such as geomorphology\cite{dunes,dunes2,app5}, overdriven detonations in gases\cite{detonation,app4}, signal processing\cite{app1}, anomalous diffusion in semiconductor growth \cite{app3} etc. With the special choice of $f(u)=u^2/2$, it is recognized as a fractional version of the viscous Burgers' equation. 
Fractional conservation laws, especially Burgers' equation, has been studied by many  authors from a theoretical perspective, mainly involving questions of well-posedness and regularity of the solutions\cite{thry1,thry2,thry3,thry4,thry5}. In the case of $\alpha<1$,  the solution is in general not smooth and shocks may appear even with smooth initial datum.   Similar to the classical  scalar conservation laws, an appropriate entropy formulation is needed to guarantee well-posedness\cite{thry2,thry6}. For the case $\alpha > 1$,  the existence and uniqueness of a regular solution has been proved in \cite{thry2,thry4}. In this case, the nonlocal term serves as subdiffusion term, and smooth out discontinuities from initial datum. 

Numerical studies of partial differential equations with nonlocal operator have attracted a lot of interest in recent years. Liu, Deng et al. have worked on numerical methods for fractional diffusion problems with fractional Laplacian operators or Riesz fractional derivatives \cite{numerical0,numerical1, numerical2,numerical3}.  Briani, Cont, Matache, et al. have considered numerical methods for financial model with fractional Laplacian operators\cite{numerical4,numerical5,numerical6}.   However, for conservation laws with fractional Laplacian operators, the development of accurate and robust numerical methods remains limited. Droniou is the first to analyze a general class of difference methods for fractional conservation laws\cite{firstfdm}. Azeras proposed a class of finite difference schemes for solving a fractional anti\-diffusive equation\cite{fdm2}. Bouharguane proposed a splitting methods for the nonlocal Fowler equation in \cite{splitting}.
For the case $\alpha <1$,  Cifani et al. \cite{cifani,cifani2} applied the discontinuous Galerkin method to the fractional conservation law and degenerate convection diffusion equations and developed some error estimation. Unfortunately, their numerical results failed to confirm their analysis.

The discontinuous Galerkin method is a well established method for classical conservation laws \cite{Jan}. However, for equations containing higher order spatial derivatives, discontinuous Galerkin methods cannot be directly applied\cite{LDG1,shu0}. A careless application of the discontinuous Galerkin method to a problem with high order derivatives could yield an inconsistent method.
The idea of local discontinuous Galerkin methods \cite{LDG1} for time dependent partial differential equations with higher derivatives is to rewrite the equation into a first order system and then apply the discontinuous Galerkin method to the system. A key ingredient for the success of this methods is the correct design of interface numerical fluxes. These fluxes must be designed to guarantee stability and local solvability of all the auxiliary variables introduced to approximate the derivatives of the solution. 

In this paper, we will consider fractional convection diffusion equations with a fractional Laplacian operator of order $\alpha (1<\alpha<2)$. Especially for $1<\alpha<2$, it is conceptionally similar to a fractional derivative  with an order between 1 and 2. To obtain a consistent and high accuracy method for this problem, we shall rewrite the fractional operator as a composite of first order derivatives and a fractional integral, and convert the fractional convection diffusion equation into a system of low order equations. This allows us to apply the local discontinuous Galerkin method.

This paper is organized as follows. In Section 2, we introduce some basic definitions and state a few central lemmas. In Section 3, we will derive the discontinuous Galerkin formulation for the fractional convection-diffusion law. In section 4,  we develop stability and convergence analysis for fractional diffusion and convection-diffusion equations. In Section 5 some numerical examples are carried out to support our analysis. Conclusions are offered in Section 6. 
 
\section{Background and definitions}

Apart from the definitions of the fractional Laplacian based on the Fourier and integral form above, it can also be defined using ideas of fractional calculus\cite{fraclap1,fraclap2,numerical1}, as
\begin{eqnarray}\label{fracdef}
-(-\Delta)^{\alpha/2}u(x) =\frac{\partial^\alpha}{\partial |x|^\alpha}u(x)= -\frac{{_{-\infty}D_x^\alpha}u(x) + {_xD_{\infty}^\alpha}u(x)}{2\cos\left(\frac{\alpha\pi}{2}\right)}
\end{eqnarray}
where $\prescript{}{-\infty}{D}_x^\alpha$ and $\prescript{}{x}{D}_{\infty}^\alpha$ refer to the left and right Riemann-Liouville fractional derivatives, respectively, of $\alpha$'th order. This definition is also known as a Riesz derivative. In this paper, we will based our developments and analysis mainly on this definition and will, in preparation, introduce a few definitions and properties of fractional integral/derivative to set the stage. 

The left and right fractional integrals of order $\alpha$ are defined as
\begin{equation}
{ _{-\infty}I_x^\alpha}u(x) = \frac{1}{\Gamma(\alpha)}\int_{-\infty}^x(x-t)^{\alpha-1}u(t)dt 
\end{equation}
\begin{equation}
{ _xI_{\infty}^\alpha}u(x) = \frac{1}{\Gamma(\alpha)}\int^{\infty}_x (t-x)^{\alpha-1}u(t)dt 
\end{equation}
This allows for the definition of the left and right Riemann-Liouville fractional derivative of order $\alpha$ $(n-1<\alpha < n)$ as,
\begin{equation}
{_{-\infty}D_x^\alpha}u(x) = \frac{1}{\Gamma(n-\alpha)}\left(\frac{d}{dx}\right)^n\int_{-\infty}^x(x-t)^{n-1-\alpha}u(t)dt=D^n( {_{-\infty}I_x^{n-\alpha}}u(x))
\end{equation}
\begin{equation}
{_{x}D_\infty^\alpha}u(x) = \frac{1}{\Gamma(n-\alpha)}\left(-\frac{d}{dx}\right)^n\int^{\infty}_x(t-x)^{n-1-\alpha}u(t)dt=(-D)^n( {_xI_\infty^{n-\alpha}}u(x))
\end{equation}

Mirroring this, Caputo's left and right fractional derivatives of order $\alpha$  $(n-1<\alpha < n)$ are defined as
\begin{equation}
{\prescript{C}{-\infty}D_x^\alpha}u(x) = \frac{1}{\Gamma(n-\alpha)}\int_{-\infty}^x(x-t)^{n-1-\alpha}\left(\frac{d}{dt}\right)^nu(t)dt= {_{-\infty}I_x^{n-\alpha}}(D^nu(x))
\end{equation}
\begin{equation}
{\prescript{C}{x}D_\infty^\alpha}u(x) = \frac{1}{\Gamma(n-\alpha)}\int^{\infty}_x(t-x)^{n-1-\alpha}\left(-\frac{d}{dt}\right)^nu(t)dt={_xI_\infty^{n-\alpha}}\left((-D)^nu(x)\right)
\end{equation}
The two definitions, the Riemann-Liouville fractional derivative and Caputo's fractional derivatives, are naturally related and they are equivalent provided all derivatives less than order $n$ of $u(x)$ disappear when $x\to \pm\infty$.

Fractional integrals and derivatives satisfy the following properties,

\begin{lemma}(Linearity\cite{Igor}).
\begin{eqnarray}
&&\prescript{}{-\infty}{I}_x^\alpha(\lambda f(x)+\mu g(x)) = \lambda\prescript{}{-\infty}{I}_x^\alpha f(x) + \mu\prescript{}{-\infty}{I}_x^\alpha g(x)\\
&&\prescript{}{-\infty}{D}_x^\alpha(\lambda f(x)+\mu g(x)) = \lambda\prescript{}{-\infty}{D}_x^\alpha f(x) + \mu\prescript{}{-\infty}{D}_x^\alpha g(x)
\end{eqnarray}
\end{lemma}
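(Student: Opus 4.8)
The plan is to observe that both identities are direct consequences of the linearity of the two classical operations from which the fractional integral and derivative are built, namely integration against a fixed kernel and ordinary $n$-th order differentiation. No fractional-calculus machinery beyond the definitions given above is required.

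First I would treat the integral identity. Writing out the definition of the left Riemann--Liouville integral applied to the combination $\lambda f + \mu g$ gives
\begin{equation*}
\prescript{}{-\infty}{I}_x^\alpha(\lambda f + \mu g)(x) = \frac{1}{\Gamma(\alpha)}\int_{-\infty}^x (x-t)^{\alpha-1}\bigl(\lambda f(t) + \mu g(t)\bigr)\,dt .
\end{equation*}
Since the weight $(x-t)^{\alpha-1}/\Gamma(\alpha)$ does not depend on $f$ or $g$, and the (improper) integral is linear, the right-hand side splits into $\lambda\,\prescript{}{-\infty}{I}_x^\alpha f(x) + \mu\,\prescript{}{-\infty}{I}_x^\alpha g(x)$, which is the first claim: the constants $\lambda,\mu$ are pulled out of the integral, and the integral of a sum equals the sum of the integrals.

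For the derivative identity I would use the factorization recorded above, $\prescript{}{-\infty}{D}_x^\alpha = D^n\circ \prescript{}{-\infty}{I}_x^{\,n-\alpha}$ with $n-1<\alpha<n$. Applying this to $\lambda f + \mu g$ and invoking the integral linearity just established gives $\prescript{}{-\infty}{I}_x^{\,n-\alpha}(\lambda f+\mu g) = \lambda\,\prescript{}{-\infty}{I}_x^{\,n-\alpha}f + \mu\,\prescript{}{-\infty}{I}_x^{\,n-\alpha}g$; then the linearity of the ordinary differential operator $D^n$ distributes over this sum and commutes with the scalars, producing $\lambda\,\prescript{}{-\infty}{D}_x^\alpha f + \mu\,\prescript{}{-\infty}{D}_x^\alpha g$. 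The right-hand operators are handled identically using $\prescript{}{x}{D}_\infty^\alpha = (-D)^n\circ \prescript{}{x}{I}_\infty^{\,n-\alpha}$.

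There is no real obstacle here; the only point deserving care is the justification for splitting the improper integral and for interchanging it with $D^n$. To make these steps rigorous one should restrict attention to a function class in which the defining integrals converge absolutely and the resulting functions are $n$ times differentiable --- for instance functions with sufficient decay as $x\to-\infty$ together with enough smoothness, the same class under which the Riemann--Liouville and Caputo derivatives were asserted to coincide. Within such a class the linearity of the integral and of classical differentiation applies verbatim, and both identities follow immediately.
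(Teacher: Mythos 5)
Your proof is correct. The paper does not prove this lemma at all --- it is quoted from Podlubny's book with a citation --- and your argument (linearity of integration against the fixed kernel $(x-t)^{\alpha-1}/\Gamma(\alpha)$, followed by composing with the linear operator $D^n$ for the derivative case) is the standard and complete justification, including the appropriate caveat about working in a function class where the improper integrals converge.
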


\begin{lemma}(Semigroup property\cite{Igor}). For $\alpha,\beta>0$
\begin{eqnarray}
&&\prescript{}{-\infty}{I}_x^{\alpha+\beta} f(x) = \prescript{}{-\infty}{I}_x^{\alpha}\left(\prescript{}{-\infty}{I}_x^{\beta}  f(x)\right)=\prescript{}{-\infty}{I}_x^{\beta}\left(\prescript{}{-\infty}{I}_x^{\alpha}  f(x)\right)
\end{eqnarray}
\end{lemma}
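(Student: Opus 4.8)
The plan is to prove the first equality $\prescript{}{-\infty}{I}_x^{\alpha+\beta} f(x) = \prescript{}{-\infty}{I}_x^{\alpha}\bigl(\prescript{}{-\infty}{I}_x^{\beta} f(x)\bigr)$ directly from the integral definition, and then obtain the commutativity for free by the symmetry of the computation in $\alpha$ and $\beta$. First I would substitute the definition twice and write the right-hand side as the iterated integral
\[
\prescript{}{-\infty}{I}_x^{\alpha}\left(\prescript{}{-\infty}{I}_x^{\beta} f(x)\right) = \frac{1}{\Gamma(\alpha)\Gamma(\beta)}\int_{-\infty}^x (x-s)^{\alpha-1}\int_{-\infty}^s (s-t)^{\beta-1} f(t)\,dt\,ds,
\]
whose domain of integration is the triangular region $\{(s,t): -\infty < t < s < x\}$.

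The key step is to interchange the order of integration by Fubini's theorem, after which $t$ ranges over $(-\infty, x)$ and $s$ over $(t,x)$, giving
\[
\frac{1}{\Gamma(\alpha)\Gamma(\beta)}\int_{-\infty}^x f(t)\left(\int_t^x (x-s)^{\alpha-1}(s-t)^{\beta-1}\,ds\right)dt.
\]
The inner integral is then evaluated with the substitution $s = t + (x-t)\tau$, which maps $[t,x]$ to $[0,1]$ and yields $(x-t)^{\alpha+\beta-1}\int_0^1 (1-\tau)^{\alpha-1}\tau^{\beta-1}\,d\tau = (x-t)^{\alpha+\beta-1}B(\beta,\alpha)$. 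Invoking the Beta--Gamma identity $B(\beta,\alpha) = \Gamma(\alpha)\Gamma(\beta)/\Gamma(\alpha+\beta)$, the prefactors cancel and the expression collapses to $\frac{1}{\Gamma(\alpha+\beta)}\int_{-\infty}^x (x-t)^{\alpha+\beta-1} f(t)\,dt$, which is exactly $\prescript{}{-\infty}{I}_x^{\alpha+\beta} f(x)$. Since this computation is symmetric under exchanging $\alpha$ and $\beta$ (and $\alpha+\beta = \beta+\alpha$), repeating it with the roles reversed gives $\prescript{}{-\infty}{I}_x^{\beta}\bigl(\prescript{}{-\infty}{I}_x^{\alpha} f(x)\bigr) = \prescript{}{-\infty}{I}_x^{\alpha+\beta} f(x)$ as well, establishing both equalities at once.

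I expect the main obstacle to be the rigorous justification of the Fubini interchange rather than the algebra: one needs enough decay or integrability of $f$ near $-\infty$ (together with local integrability of the kernels, which holds automatically since $\alpha,\beta>0$) so that the double integral converges absolutely and the swap is legitimate. For sufficiently well-behaved $f$ the singularities $(x-s)^{\alpha-1}$ and $(s-t)^{\beta-1}$ remain integrable, so one may first apply Tonelli's theorem to $|f|$ to verify absolute convergence and then invoke Fubini's theorem to carry out the interchange in the general case.
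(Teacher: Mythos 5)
Your argument is correct and complete: the Fubini interchange over the triangle $\{-\infty<t<s<x\}$ followed by the substitution $s=t+(x-t)\tau$ and the Beta--Gamma identity is exactly the standard proof of the semigroup property, and your caveat about needing integrability of $f$ near $-\infty$ to justify the interchange is the right one. The paper itself gives no proof of this lemma (it simply cites Podlubny), and the argument in that reference is the same computation you have written, so there is nothing to compare beyond noting agreement.
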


\begin{lemma}\label{split}\cite{Igor}
 Suppose $u^{(j)}(x) = 0$,when $x \to \pm\infty, \forall \,0\leqslant j \leqslant n$ ( $n-1< \alpha <n$), then
\begin{eqnarray}
\prescript{}{-\infty}{D^\alpha_x}u(x) = D^n\left(\prescript{}{-\infty}{I^{n-\alpha}_x}u(x)\right) = \prescript{}{-\infty}{I^{n-\alpha}_x}\left(D^nu(x)\right)\\
\prescript{}{x}{D^\alpha_\infty}u(x) = (-D)^n\left(\prescript{}{x}{I^{n-\alpha}_\infty}u(x)\right) = \prescript{}{x}{I^{n-\alpha}_\infty}\left((-D)^nu(x)\right)
\end{eqnarray}

\end{lemma}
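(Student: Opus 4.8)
The first equality in each displayed line is merely the definition of the Riemann--Liouville derivative recorded above, so the only thing requiring proof is the second equality, namely the commutation of the integer derivative $D^n$ with the fractional integral. Setting $\beta = n-\alpha \in (0,1)$, my plan is to prove the single-order identity $D\bigl(\prescript{}{-\infty}{I^\beta_x}u\bigr) = \prescript{}{-\infty}{I^\beta_x}(Du)$ and then iterate it $n$ times.

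To obtain the single-order identity I would first put the left fractional integral into convolution form. The substitution $s = x-t$ gives
\[
\prescript{}{-\infty}{I^\beta_x}u(x) = \frac{1}{\Gamma(\beta)}\int_0^\infty s^{\beta-1}u(x-s)\,ds,
\]
where now $x$ enters only through the argument of $u$, while the singular kernel $s^{\beta-1}$ is independent of $x$ and locally integrable at $s=0$ because $\beta-1>-1$. Differentiating in $x$ under the integral sign then moves the derivative directly onto $u$,
\[
D\bigl(\prescript{}{-\infty}{I^\beta_x}u\bigr)(x) = \frac{1}{\Gamma(\beta)}\int_0^\infty s^{\beta-1}u'(x-s)\,ds = \prescript{}{-\infty}{I^\beta_x}(Du)(x),
\]
which is the desired base case. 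The analogous substitution $s=t-x$ for the right integral, $\prescript{}{x}{I^\beta_\infty}u(x) = \frac{1}{\Gamma(\beta)}\int_0^\infty s^{\beta-1}u(x+s)\,ds$, yields $(-D)\bigl(\prescript{}{x}{I^\beta_\infty}u\bigr) = \prescript{}{x}{I^\beta_\infty}\bigl((-D)u\bigr)$ in exactly the same fashion.

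With the base case in hand I would finish by induction on the order of differentiation. Applying the identity once gives $D\,\prescript{}{-\infty}{I^\beta_x}u = \prescript{}{-\infty}{I^\beta_x}Du$; applying it again with $Du$ in place of $u$ gives $D^2\,\prescript{}{-\infty}{I^\beta_x}u = \prescript{}{-\infty}{I^\beta_x}D^2u$, and after $n$ such steps $D^n\bigl(\prescript{}{-\infty}{I^{n-\alpha}_x}u\bigr) = \prescript{}{-\infty}{I^{n-\alpha}_x}(D^nu)$, which is the first line; the right-derivative statement follows identically. Each application of the base case to $D^ju$ requires that $D^ju$ decay at the relevant endpoint for $0\le j\le n-1$, and this is exactly what the hypothesis $u^{(j)}(x)\to 0$ as $x\to\pm\infty$ supplies.

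The one step that genuinely needs care---and the main obstacle---is justifying the differentiation under the integral sign, since the kernel is singular at $s=0$ and the domain is unbounded. I would control the end $s\to 0$ using the local integrability $\beta-1>-1$, and the end $s\to\infty$ using the prescribed decay of $u$ and its derivatives to produce a locally uniform integrable majorant for $s^{\beta-1}u^{(j)}(x-s)$. This is precisely where the vanishing hypotheses are used: they guarantee convergence of all the integrals and simultaneously annihilate the boundary terms that would otherwise appear were one instead to derive the same identity by integration by parts on the singular Riemann--Liouville kernel. Once differentiation under the integral is licensed, the remaining manipulations are mechanical.
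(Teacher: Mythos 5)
The paper offers no proof of this lemma; it is quoted directly from Podlubny \cite{Igor}, where the argument is precisely the one you outline: write the left (resp.\ right) fractional integral as a convolution $\frac{1}{\Gamma(\beta)}\int_0^\infty s^{\beta-1}u(x\mp s)\,ds$ with $\beta=n-\alpha$, differentiate under the integral sign so the derivative lands on $u$, and iterate $n$ times, the hypotheses on $u^{(j)}$ being exactly what makes the successive applications legitimate and kills the boundary terms in the equivalent integration-by-parts derivation. Your proposal is therefore correct and coincides with the standard proof in the cited source. One caveat worth recording: mere vanishing of $u^{(j)}$ at $\mp\infty$ does not by itself furnish the locally uniform integrable majorant you invoke at $s\to\infty$, because $s^{\beta-1}$ with $\beta\in(0,1)$ is not integrable at infinity; one needs quantitative decay (faster than $|x|^{-(n-\alpha)}$), or compact support as in the paper's actual setting, even for the fractional integrals to converge --- but this is an imprecision in the lemma's stated hypotheses rather than a flaw in your argument.
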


From Lemma \ref{split}, we get, for $1<\alpha<2$ and a continuous function $u$ with $u^{(j)}(x)=0,x\to \pm\infty,0\leqslant j \leqslant 2$
\begin{eqnarray}\label{defsplit}
\nonumber-(-\Delta)^{\alpha/2}u(x)&=& -\frac{1}{2\cos(\alpha\pi/2)}\frac{d^2}{d x^2}\left(  {_{-\infty}I_x^{2-\alpha}}u(x)+ {_xI_\infty^{2-\alpha}}u(x)\right) \\
&=&-\frac{1}{2\cos(\alpha\pi/2)}\frac{d}{d x}\left(  {_{-\infty}I_x^{2-\alpha}}\frac{du(x)}{dx}+ {_xI_\infty^{2-\alpha}}\frac{du(x)}{dx}\right) \\
\nonumber &=&-\frac{1}{2\cos(\alpha\pi/2)}\left(  {_{-\infty}I_x^{2-\alpha}}\frac{d^2u(x)}{d x^2}+ {_xI_\infty^{2-\alpha}}\frac{d^2u(x)}{d x^2}\right) 
\end{eqnarray}
For $0<s<1$, we define 
\begin{eqnarray*}
\Delta_{-s/2}u(x) = -\frac{{_{-\infty}D_x^{-s}}u(x) + {_xD_{\infty}^{-s}}u(x)}{2\cos\left(\frac{(2-s)\pi}{2}\right)}=\frac{{_{-\infty}I_x^{s}}u(x) + {_xI_{\infty}^{s}}u(x)}{2\cos\left(\frac{s\pi}{2}\right)}
\end{eqnarray*}
When $1<\alpha<2$, we have
\begin{eqnarray}\label{split2}
-(-\Delta)^{\alpha/2}u(x) =\frac{d^2}{dx^2}\left(\Delta_{(\alpha-2)/2}u\right)=\Delta_{(\alpha-2)/2}\left(\frac{d^2u}{dx^2}\right)= \frac{d}{d x}\left(\Delta_{(\alpha-2)/2}\frac{du}{dx}\right)
\end{eqnarray}
To carry out the analysis, we introduce appropriate fractional spaces. \\
\begin{definition}{(Left fractional space\cite{roop1} ).}
We define the following seminorm
\begin{equation}
|u|_{J_L^{\alpha}(\mathbb{R})}=\|\prescript{}{-\infty}{D^\alpha_x}u\|_{L^2(\mathbb{R})} 
\end{equation}
and the norm
\begin{equation}
\|u\|_{J_L^{\alpha}(\mathbb{R})}=\left(|u|^2_{J_L^{\alpha}(\mathbb{R})} +\|u\|^2_{L^2(\mathbb{R})} \right)^\frac{1}{2}
\end{equation}
and let $J_L^\alpha(\mathbb{R})$ denote the closure of $C_0^\infty(\mathbb{R})$ with respect to $\|\cdot \|_{J_L^\alpha}$. \\
\end{definition}

\begin{definition}{(Right fractional space\cite{roop1} ).}
We define the following seminorm
\begin{equation}
|u|_{J_R^{\alpha}(\mathbb{R})}=\|\prescript{}{x}{D^\alpha_\infty}u\|_{L^2(\mathbb{R})} 
\end{equation}
and the norm
\begin{equation}
\|u\|_{J_R^{\alpha}(\mathbb{R})}=\left(|u|^2_{J_R^{\alpha}(\mathbb{R})} +\|u\|^2_{L^2(\mathbb{R})} \right)^\frac{1}{2}
\end{equation}
and let $J_R^\alpha(\mathbb{R})$ denote the closure of $C_0^\infty(\mathbb{R})$ with respect to $\|\cdot \|_{J_R^\alpha}$. \\
\end{definition}

\begin{definition}{(Symmetric fractional space\cite{roop1}) .}
We define the following seminorm
\begin{equation}
|u|_{J_S^{\alpha}(\mathbb{R})}=\left|\left(\prescript{}{-\infty}{D^\alpha_x}u, \prescript{}{x}{D^\alpha_\infty}u\right)_{L^2(\mathbb{R})} \right|^\frac{1}{2}
\end{equation}
and the norm
\begin{equation}
\|u\|_{J_S^{\alpha}(\mathbb{R})}=\left(|u|^2_{J_S^{\alpha}(\mathbb{R})} +\|u\|^2_{L^2(\mathbb{R})} \right)^\frac{1}{2}
\end{equation}
and let $J_S^\alpha(\mathbb{R})$ denote the closure of $C_0^\infty(\mathbb{R})$ with respect to $\|\cdot \|_{J_S^\alpha}$.
\end{definition}

Using these definitions, we recover the following result, \\
\begin{lemma}\label{innerproduct}{(Adjoint property\cite{Samko, roop1})}
\begin{equation}
\left( {_{-\infty}I_x^\alpha}u,    u \right)_{\mathbb{R}} = \left(  u,   {_{x}I_\infty^\alpha}u \right)_{\mathbb{R}} 
\end{equation}
\end{lemma}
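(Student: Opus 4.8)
The plan is to reduce both inner products to iterated integrals via the integral definitions of the fractional integrals and then exchange the order of integration using Fubini's theorem. Writing out the left-hand side,
\[
\left( {_{-\infty}I_x^\alpha}u, u \right)_{\mathbb{R}} = \frac{1}{\Gamma(\alpha)}\int_{-\infty}^{\infty}\left(\int_{-\infty}^{x}(x-t)^{\alpha-1}u(t)\,dt\right)u(x)\,dx,
\]
so the integration is effectively carried out over the half-plane $\{(t,x):t<x\}$ with weight $(x-t)^{\alpha-1}u(t)u(x)$.

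First I would reduce to the case $u\in C_0^\infty(\mathbb{R})$ and recover the general statement by passing to the closure in the relevant fractional norms, which is legitimate because $C_0^\infty(\mathbb{R})$ is dense in $J_L^\alpha$, $J_R^\alpha$ and $J_S^\alpha$ by definition and the fractional integral operators are bounded between the associated spaces. For such smooth, compactly supported $u$, the region of integration intersected with the support of $u(t)u(x)$ is bounded, and the kernel $(x-t)^{\alpha-1}$ carries only an integrable singularity along the diagonal $t=x$ since $\alpha>0$; hence the double integral is absolutely convergent and Fubini's theorem applies.

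Next I would exchange the order of integration. Describing the region $\{t<x\}$ by letting $x$ range over $(t,\infty)$ for each fixed $t$ yields
\[
\left( {_{-\infty}I_x^\alpha}u, u \right)_{\mathbb{R}} = \int_{-\infty}^{\infty}u(t)\left(\frac{1}{\Gamma(\alpha)}\int_{t}^{\infty}(x-t)^{\alpha-1}u(x)\,dx\right)dt.
\]
The bracketed expression, viewed as a function of $t$, is exactly ${_tI_\infty^\alpha}u(t)$; renaming the integration variable $t$ as $x$ then identifies the right-hand side as $\left(u, {_xI_\infty^\alpha}u\right)_{\mathbb{R}}$, which completes the argument.

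The only genuine obstacle is the justification of the interchange: one must control the diagonal singularity of the kernel (harmless for $\alpha>0$) and the behaviour at infinity, which is precisely why the identity is first established for compactly supported smooth $u$ and then extended by density together with the continuity of the fractional integral operators on the fractional spaces. Everything else is a routine relabelling of dummy variables.
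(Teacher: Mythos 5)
Your argument is correct: the paper does not prove this lemma at all but simply cites it from Samko et al.\ and Ervin--Roop, and your Fubini/Tonelli computation over the half-plane $\{t<x\}$, followed by extension from $C_0^\infty(\mathbb{R})$ by density, is exactly the standard derivation found in those references. The only point worth flagging is that the density step quietly uses continuity of the pairing $u\mapsto({_{-\infty}I_x^\alpha}u,u)$ on the relevant fractional space, which holds on the bounded domains where the paper actually applies the lemma but is not automatic for the Riesz potential on all of $\mathbb{R}$; since the paper itself glosses over this, your proof is at least as careful as the source.
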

\begin{lemma}\label{fracnorm}\cite{roop1}
\begin{equation}
\left( {_{-\infty}I_x^\alpha}u,   {_{x}I_\infty^\alpha}u \right)_{\mathbb{R}} = \cos(\alpha\pi)|u|^2_{J_L^{-\alpha}(\mathbb{R})}= \cos(\alpha\pi)|u|^2_{J^{-\alpha}_R({\mathbb{R}})}
\end{equation}
\end{lemma}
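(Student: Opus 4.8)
The plan is to pass to the Fourier side, where both fractional integrals act as Fourier multipliers, and then read off the stated identities from Plancherel's theorem. Since it suffices to establish the claim on the dense class $C_0^\infty(\mathbb{R})$ used to define the spaces $J_L^{-\alpha}$ and $J_R^{-\alpha}$, I may freely apply the Fourier transform and assume $u$ is real-valued.

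First I would record the multiplier symbols. Writing ${_{-\infty}I_x^\alpha}u$ and ${_xI_\infty^\alpha}u$ as convolutions against the causal and anti-causal kernels $x_+^{\alpha-1}/\Gamma(\alpha)$ and $x_-^{\alpha-1}/\Gamma(\alpha)$, their Fourier transforms are
\[
\widehat{{_{-\infty}I_x^\alpha}u}(\xi) = (i\xi)^{-\alpha}\hat u(\xi), \qquad \widehat{{_xI_\infty^\alpha}u}(\xi) = (-i\xi)^{-\alpha}\hat u(\xi),
\]
where $(\pm i\xi)^{-\alpha} = |\xi|^{-\alpha}e^{\mp i\frac{\pi}{2}\alpha\,\mathrm{sgn}(\xi)}$ (up to the fixed $2\pi$-normalization dictated by the Fourier convention used in the paper). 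In particular $\bigl|(\pm i\xi)^{-\alpha}\bigr| = |\xi|^{-\alpha}$, so both integrals share the same modulus symbol.

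Next, by Plancherel the inner product becomes a single frequency integral,
\[
\left( {_{-\infty}I_x^\alpha}u, {_xI_\infty^\alpha}u \right)_{\mathbb R} = \int_{\mathbb R} (i\xi)^{-\alpha}\,\overline{(-i\xi)^{-\alpha}}\,|\hat u(\xi)|^2\,d\xi = \int_{\mathbb R} |\xi|^{-2\alpha} e^{-i\pi\alpha\,\mathrm{sgn}(\xi)}\,|\hat u(\xi)|^2\,d\xi,
\]
since the two phase factors combine to $e^{-i\pi\alpha\,\mathrm{sgn}(\xi)}$. Because $u$ is real, $|\hat u|^2$ is even, so the $\xi>0$ and $\xi<0$ contributions pair up after the substitution $\xi\mapsto-\xi$, the imaginary parts cancel, and
\[
\left( {_{-\infty}I_x^\alpha}u, {_xI_\infty^\alpha}u \right)_{\mathbb R} = \cos(\alpha\pi)\int_{\mathbb R} |\xi|^{-2\alpha}|\hat u(\xi)|^2\,d\xi.
\]

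Finally I would identify the remaining integral with the two seminorms. Applying Plancherel once more to the modulus symbols gives $\int_{\mathbb R}|\xi|^{-2\alpha}|\hat u|^2\,d\xi = \|{_{-\infty}I_x^\alpha}u\|_{L^2}^2 = \|{_xI_\infty^\alpha}u\|_{L^2}^2$; since ${_{-\infty}D_x^{-\alpha}} = {_{-\infty}I_x^{\alpha}}$ and ${_x D_\infty^{-\alpha}} = {_x I_\infty^{\alpha}}$, these are exactly $|u|^2_{J_L^{-\alpha}}$ and $|u|^2_{J_R^{-\alpha}}$, which simultaneously shows that the two right-hand seminorms coincide. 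Substituting yields the claimed chain of equalities. The only delicate points are fixing the branch of $(\pm i\xi)^{-\alpha}$ consistently, so that the phases genuinely add to $-\pi\alpha\,\mathrm{sgn}(\xi)$, and invoking the evenness of $|\hat u|^2$; the latter is where real-valuedness — or, equivalently, reduction to the defining dense class $C_0^\infty(\mathbb{R})$ — is essential, and is the main thing to get right.
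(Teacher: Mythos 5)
The paper gives no proof of this lemma at all --- it is quoted directly from \cite{roop1}, and the argument in that reference is precisely the Fourier-multiplier/Plancherel computation you carry out: the symbols $(\pm i\xi)^{-\alpha}=|\xi|^{-\alpha}e^{\mp i\pi\alpha\,\mathrm{sgn}(\xi)/2}$, the combined phase $e^{-i\pi\alpha\,\mathrm{sgn}(\xi)}$, and the evenness of $|\hat u|^2$ for real $u$ killing the odd imaginary part. Your proposal is therefore correct and coincides with the cited proof, including the final identification of $\int_{\mathbb{R}}|\xi|^{-2\alpha}|\hat u(\xi)|^2\,d\xi$ with both $|u|^2_{J_L^{-\alpha}(\mathbb{R})}$ and $|u|^2_{J_R^{-\alpha}(\mathbb{R})}$ through ${_{-\infty}D_x^{-\alpha}}={_{-\infty}I_x^{\alpha}}$ and ${_{x}D_\infty^{-\alpha}}={_{x}I_\infty^{\alpha}}$.
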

From Lemma \ref{innerproduct}-\ref{fracnorm}, we recover \\
\begin{lemma}\label{fracnorm2}For all $0<s<1$,
\begin{equation}
\left( \Delta_{-s}u,  u \right)_{\mathbb{R}} = |u|^2_{J_L^{-s}(\mathbb{R})}= |u|^2_{J_R^{-s}(\mathbb{R})}
\end{equation}
\end{lemma}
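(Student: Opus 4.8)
The plan is to collapse the bilinear form $(\Delta_{-s}u,u)_{\mathbb{R}}$ onto the symmetric integral pairing already evaluated in Lemma~\ref{fracnorm}, after which the trigonometric prefactors cancel. First I would unfold the definition of $\Delta_{-s}$. Reading the definition of $\Delta_{-s/2}$ with the substitution $s\mapsto 2s$, the subscript becomes $-s$ and the operator takes the form
\[
\Delta_{-s}u=\frac{{_{-\infty}I_x^{2s}}u+{_xI_\infty^{2s}}u}{2\cos(s\pi)},
\]
which is built from left and right fractional integrals of order $2s$; since $0<s<1$ we have $0<2s<2$, so all operators below are well defined.

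Next I would pair each summand against $u$. Applying the adjoint property (Lemma~\ref{innerproduct}) with $\alpha=2s$ gives $({_{-\infty}I_x^{2s}}u,u)_{\mathbb{R}}=(u,{_xI_\infty^{2s}}u)_{\mathbb{R}}=({_xI_\infty^{2s}}u,u)_{\mathbb{R}}$, the last equality by symmetry of the real $L^2$ inner product. Hence the two contributions coincide and
\[
(\Delta_{-s}u,u)_{\mathbb{R}}=\frac{1}{\cos(s\pi)}\,({_{-\infty}I_x^{2s}}u,u)_{\mathbb{R}}.
\]

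The core of the argument is to convert this single order-$2s$ integral into the order-$s$ symmetric pairing that Lemma~\ref{fracnorm} treats. I would invoke the semigroup property to factor ${_{-\infty}I_x^{2s}}u={_{-\infty}I_x^{s}}\big({_{-\infty}I_x^{s}}u\big)$, and then apply the adjoint property once more, in its general two-function form with $\alpha=s$, to move one factor onto the right integral, obtaining $({_{-\infty}I_x^{2s}}u,u)_{\mathbb{R}}=({_{-\infty}I_x^{s}}u,{_xI_\infty^{s}}u)_{\mathbb{R}}$. Now Lemma~\ref{fracnorm} with $\alpha=s$ applies directly and yields $({_{-\infty}I_x^{s}}u,{_xI_\infty^{s}}u)_{\mathbb{R}}=\cos(s\pi)|u|^2_{J_L^{-s}(\mathbb{R})}=\cos(s\pi)|u|^2_{J_R^{-s}(\mathbb{R})}$. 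Substituting back, the factors $\cos(s\pi)$ cancel and give the claimed identity.

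The step I expect to demand the most care is the index bookkeeping: one must keep straight that $\Delta_{-s}$ carries integrals of order $2s$ while the target seminorm $|u|_{J_L^{-s}}=\|{_{-\infty}I_x^{s}}u\|_{L^2}$ lives at order $s$, and that the cosine argument $\cos(s\pi)$ generated by the definition is precisely the one produced by Lemma~\ref{fracnorm}, so that the cancellation is exact and leaves no stray constant. A secondary point to address is the borderline value $s=\hf$, where $\cos(s\pi)$ vanishes and both the prefactor and the pairing $({_{-\infty}I_x^{s}}u,{_xI_\infty^{s}}u)_{\mathbb{R}}$ tend to zero; there the identity is read as the removable limit. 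Beyond these, every step is a purely algebraic consequence of the linearity, semigroup, and adjoint lemmas together with Lemma~\ref{fracnorm}, so no new analytic estimate is required.
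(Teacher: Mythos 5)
Your proof is correct and takes essentially the same route as the paper, which offers no explicit argument but states that Lemma \ref{fracnorm2} ``follows from Lemmas \ref{innerproduct}--\ref{fracnorm}'': your chain --- unfold $\Delta_{-s}$ into order-$2s$ integrals, symmetrize the two terms via the adjoint property, split ${_{-\infty}I_x^{2s}}$ by the semigroup property and move one factor across the inner product, then cancel $\cos(s\pi)$ against Lemma \ref{fracnorm} --- is exactly the intended derivation. Your side remarks (the index bookkeeping between order $2s$ and order $s$, and the removable singularity at $s=1/2$) address points the paper silently glosses over.
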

Generally, we consider the problem in a bounded domain instead of $\mathbb{R}$. Hence, we restrict the definition to the domain $\Omega=[a,b]$. \\
\begin{definition}
Define the spaces $J^{\alpha}_{L,0}(\Omega), J^{\alpha}_{R,0}(\Omega),  J^{\alpha}_{S,0}(\Omega))$  as the closures of $C_0^\infty(\Omega)$ under their  respective norms. 
\end{definition}

For these fractional spaces,  we have the following Theorem\cite{whdeng}, \\
\begin{theorem}\label{embedded}
If $-\alpha_2<-\alpha_1<0$, then $J^{-\alpha_1}_{L,0}(\Omega) ( \text{or}\, J^{-\alpha_1}_{R,0}(\Omega)\,  \text{or} \,  J^{-\alpha_1}_{S,0}(\Omega))$ is embedded into $J^{-\alpha_2}_{L,0}( \Omega) (\text{or}\,  J^{-\alpha_2}_{R,0}(\Omega)\, \text{or}\, J^{-\alpha_2}_{S,0}(\Omega))$, and $L^2(\Omega)$ is embedded into both of them. \\
\end{theorem}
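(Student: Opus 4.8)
The plan is to reduce the whole statement to a single boundedness estimate for the Riemann--Liouville fractional integral on a bounded interval, exploiting that for a negative order the derivative $\prescript{}{-\infty}{D}_x^{-\alpha}$ is by definition the integral $\prescript{}{-\infty}{I}_x^{\alpha}$. Accordingly, the seminorm defining $J^{-\alpha_i}_{L,0}(\Omega)$ is $|u|_{J_L^{-\alpha_i}}=\|\prescript{}{a}{I}_x^{\alpha_i}u\|_{L^2(\Omega)}$, where for $u\in C_0^\infty(\Omega)$ extended by zero one has $\prescript{}{-\infty}{I}_x^{\alpha}u=\prescript{}{a}{I}_x^{\alpha}u$ on $\Omega$. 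Since $C_0^\infty(\Omega)$ is dense in each of these spaces by definition, it suffices to prove the norm inequalities for $u\in C_0^\infty(\Omega)$ and extend the inclusion map by continuity. Setting $\beta=\alpha_2-\alpha_1>0$, the semigroup property factors the higher-order integral as $\prescript{}{a}{I}_x^{\alpha_2}u=\prescript{}{a}{I}_x^{\beta}(\prescript{}{a}{I}_x^{\alpha_1}u)$, so that $|u|_{J_L^{-\alpha_2}}=\|\prescript{}{a}{I}_x^{\beta}(\prescript{}{a}{I}_x^{\alpha_1}u)\|_{L^2(\Omega)}$, and the entire question becomes whether $\prescript{}{a}{I}_x^{\beta}$ maps $L^2(\Omega)$ boundedly into itself.

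The crux, and the step I expect to be the main obstacle, is precisely this boundedness of $\prescript{}{a}{I}_x^{\beta}$ on $L^2(\Omega)$ for $\beta>0$. I would establish it by writing $\prescript{}{a}{I}_x^{\beta}v(x)=\frac{1}{\Gamma(\beta)}\int_a^x (x-t)^{\beta-1}v(t)\,dt$ as the convolution on $\mathbb{R}$ of the zero-extension of $v$ with the truncated kernel $k_\beta(y)=\frac{1}{\Gamma(\beta)}\,y^{\beta-1}\mathbf{1}_{(0,\,b-a)}(y)$. Because $\beta>0$ this kernel is integrable, with $\|k_\beta\|_{L^1}=\frac{(b-a)^{\beta}}{\Gamma(\beta+1)}$, so Young's convolution inequality gives $\|\prescript{}{a}{I}_x^{\beta}v\|_{L^2(\Omega)}\le \|k_\beta\|_{L^1}\,\|v\|_{L^2(\Omega)}$. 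The finiteness of the domain is exactly what makes the truncated kernel lie in $L^1$; on all of $\mathbb{R}$ the kernel is not integrable and the estimate fails, which is why the theorem is stated for $\Omega=[a,b]$.

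With this estimate the left case is immediate: taking $v=\prescript{}{a}{I}_x^{\alpha_1}u$ yields $|u|_{J_L^{-\alpha_2}}\le \|k_\beta\|_{L^1}\,|u|_{J_L^{-\alpha_1}}$, and since the $L^2(\Omega)$ term is common to both full norms we obtain $\|u\|_{J_L^{-\alpha_2}}\le C\,\|u\|_{J_L^{-\alpha_1}}$, extending to $J^{-\alpha_1}_{L,0}(\Omega)$ by density. The embedding $L^2(\Omega)\hookrightarrow J^{-\alpha_i}_{L,0}(\Omega)$ follows from the same boundedness with $\beta=\alpha_i$, since $\|u\|_{J_L^{-\alpha_i}}^2=\|\prescript{}{a}{I}_x^{\alpha_i}u\|_{L^2}^2+\|u\|_{L^2}^2\le(\|k_{\alpha_i}\|_{L^1}^2+1)\,\|u\|_{L^2}^2$. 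The right-space case is identical after replacing $\prescript{}{a}{I}_x^{\beta}$ by $\prescript{}{x}{I}_b^{\beta}$, whose kernel is the reflection of $k_\beta$ and has the same $L^1$ norm. For the symmetric space I would reduce to the left seminorm via Lemma~\ref{fracnorm}: for $u\in C_0^\infty(\Omega)$ the supports of $\prescript{}{a}{I}_x^{\alpha}u$ and $\prescript{}{x}{I}_b^{\alpha}u$ overlap only on $\Omega$, so the inner product over $\Omega$ equals that over $\mathbb{R}$ and hence $(\prescript{}{a}{I}_x^{\alpha}u,\prescript{}{x}{I}_b^{\alpha}u)_{\Omega}=\cos(\alpha\pi)\,|u|_{J_L^{-\alpha}}^2$, giving $|u|_{J_S^{-\alpha}}^2=|\cos(\alpha\pi)|\,|u|_{J_L^{-\alpha}}^2$. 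Thus the symmetric seminorm is equivalent to the left one whenever $\cos(\alpha\pi)\neq 0$ (i.e.\ $\alpha\notin\tfrac12+\mathbb{Z}$), reducing the symmetric embedding, and the companion bound $L^2(\Omega)\hookrightarrow J^{-\alpha_i}_{S,0}(\Omega)$, to the cases already treated.
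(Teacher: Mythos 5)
Your proof is essentially correct, but note that the paper itself offers no proof of Theorem~\ref{embedded}: it is quoted from the reference \cite{whdeng} (Deng and Hesthaven), so there is no internal argument to compare against. On its own merits, your reduction is the standard and natural one: identify $\prescript{}{-\infty}{D}_x^{-\alpha}$ with $\prescript{}{-\infty}{I}_x^{\alpha}$, use the semigroup property to factor $\prescript{}{a}{I}_x^{\alpha_2}=\prescript{}{a}{I}_x^{\beta}\prescript{}{a}{I}_x^{\alpha_1}$, and observe that on a bounded interval $\prescript{}{a}{I}_x^{\beta}$ is convolution with an $L^1$ kernel, so Young's inequality gives $L^2$-boundedness with constant $(b-a)^{\beta}/\Gamma(\beta+1)$; the density of $C_0^\infty(\Omega)$ then extends the inequality to the closure, and the $L^2$-embedding is the case $v=u$. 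You correctly identify that the finiteness of $\Omega$ is what makes the kernel integrable. The one soft spot is the symmetric space: your route through Lemma~\ref{fracnorm} degenerates when $\cos(\alpha\pi)=0$ (i.e.\ $\alpha\in\tfrac12+\mathbb{Z}$), where the $J_S$ seminorm vanishes and the claimed equivalence with the left seminorm fails. This is easily repaired without the lemma: by Cauchy--Schwarz,
\begin{equation*}
|u|_{J_S^{-\alpha_2}}^2=\left|\left(\prescript{}{a}{I}_x^{\alpha_2}u,\ \prescript{}{x}{I}_b^{\alpha_2}u\right)_{\Omega}\right|\leqslant \|\prescript{}{a}{I}_x^{\alpha_2}u\|_{L^2(\Omega)}\,\|\prescript{}{x}{I}_b^{\alpha_2}u\|_{L^2(\Omega)}\leqslant C\|u\|_{L^2(\Omega)}^2\leqslant C\|u\|_{J_S^{-\alpha_1}}^2,
\end{equation*}
using your kernel bound on both factors and the fact that any of the full $J^{-\alpha_1}$ norms dominates the $L^2$ norm. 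With that adjustment the argument covers all three families of spaces for all $0<\alpha_1<\alpha_2$.
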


\begin{lemma}\label{fracpc}(Fractional Poincar\'e-Friedrichs)\cite{roop1}
For $u\in J^\mu_{L,0}(\Omega)$, we have 
$$\|u\|_{L^2(\Omega)}\leqslant C |u|_{J^\mu_{L,0}},$$
and for $u\in J^\mu_{R,0}(\Omega)$, we have 
$$\|u\|_{L^2(\Omega)}\leqslant C |u|_{J^\mu_{R,0}}.$$
\end{lemma}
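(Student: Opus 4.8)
The plan is to prove the estimate first for $u \in C_0^\infty(\Omega)$, where everything is genuinely finite and the Fourier machinery applies cleanly, and then pass to arbitrary $u \in J^\mu_{L,0}(\Omega)$ by the density that defines the space. Throughout I extend $u$ by zero outside $\Omega = [a,b]$, so that $\|u\|_{L^2(\Omega)} = \|u\|_{L^2(\mathbb{R})}$ and the seminorm $|u|_{J^\mu_{L,0}} = \|{_{-\infty}D_x^\mu}u\|_{L^2(\mathbb{R})}$ is computed on the whole line. Since such a $u$ is Schwartz with all derivatives vanishing at $\pm\infty$, the left Riemann--Liouville derivative ${_{-\infty}D_x^\mu}$ has Fourier symbol of modulus $c_0|\xi|^\mu$ (with $c_0$ fixed by the chosen normalization), so Parseval's identity gives $|u|^2_{J^\mu_{L,0}} = c_0^2\int_{\mathbb{R}} |\xi|^{2\mu}|\hat u(\xi)|^2\,d\xi$, whereas $\|u\|^2_{L^2(\Omega)} = \int_{\mathbb{R}} |\hat u(\xi)|^2\,d\xi$.

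The crux is that the naive pointwise comparison $|\hat u|^2 \le |\xi|^{-2\mu}\,|\xi|^{2\mu}|\hat u|^2$ is useless near $\xi=0$ — and indeed no such inequality holds on the whole line, so the boundedness of $\Omega$ must enter precisely to control the low frequencies. I would therefore split the frequency axis at a threshold $R>0$. On $\{|\xi|>R\}$ the factor $|\xi|^{-2\mu}\le R^{-2\mu}$ is harmless and yields $\int_{|\xi|>R}|\hat u|^2 \le c_0^{-2}R^{-2\mu}\,|u|^2_{J^\mu_{L,0}}$. On $\{|\xi|\le R\}$ I would use compact support through the crude bound $|\hat u(\xi)| \le \|u\|_{L^1(\Omega)} \le |\Omega|^{1/2}\|u\|_{L^2(\Omega)}$ (Cauchy--Schwarz on the bounded set), giving $\int_{|\xi|\le R}|\hat u|^2 \le 2R\,|\Omega|\,\|u\|^2_{L^2(\Omega)}$.

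Adding the two contributions produces $\|u\|^2_{L^2(\Omega)} \le 2R|\Omega|\,\|u\|^2_{L^2(\Omega)} + c_0^{-2}R^{-2\mu}|u|^2_{J^\mu_{L,0}}$. Choosing $R$ small enough that $2R|\Omega|\le \tfrac12$ (e.g. $R = 1/(4|\Omega|)$) absorbs the first term into the left-hand side and leaves $\|u\|^2_{L^2(\Omega)} \le 2c_0^{-2}R^{-2\mu}|u|^2_{J^\mu_{L,0}}$, which is the claim with $C$ depending only on $\mu$ and $|\Omega|$. A density argument then promotes the estimate from $C_0^\infty(\Omega)$ to all of $J^\mu_{L,0}(\Omega)$: both $u\mapsto \|u\|_{L^2}$ and $u\mapsto |u|_{J^\mu_{L,0}}$ are continuous with respect to $\|\cdot\|_{J^\mu_L}$, so the inequality passes to the closure. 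The right-sided statement follows verbatim under the reflection $x\mapsto a+b-x$, which interchanges ${_{-\infty}D_x^\mu}$ and ${_xD_\infty^\mu}$ while leaving both $L^2$ norms invariant.

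I expect the main obstacle to be exactly this low-frequency control: making rigorous that compact support alone, with no zero-mean hypothesis on $u$, suffices to bound $\int_{|\xi|\le R}|\hat u|^2$. The $L^1$--$L^2$ estimate above settles it, but one must justify the Fourier symbol of ${_{-\infty}D_x^\mu}$ and the use of Parseval on $\mathbb{R}$. Alternatively, one can avoid the symbol computation altogether by writing $u = {_{-\infty}I_x^\mu}\big({_{-\infty}D_x^\mu}u\big)$ on $\Omega$ — valid because $u$ and its derivatives vanish at the left endpoint, so the boundary terms in the composition formula drop — and then bounding the fractional integral via Young's inequality with the $L^1$ kernel $s^{\mu-1}/\Gamma(\mu)$ on $(0,|\Omega|)$, which gives the explicit constant $C = |\Omega|^\mu/\Gamma(\mu+1)$.
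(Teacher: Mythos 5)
The paper does not actually prove this lemma --- it is imported verbatim from \cite{roop1}, so the only honest comparison is with that reference, which obtains the inequality by identifying $J^\mu_{L,0}(\Omega)$ with the fractional Sobolev space $H^\mu_0(\Omega)$ via the Fourier symbol of the Riemann--Liouville derivative; your first route is essentially that argument made explicit. The frequency splitting is correct: compact support gives $|\hat u(\xi)|\le \|u\|_{L^1(\Omega)}\le |\Omega|^{1/2}\|u\|_{L^2(\Omega)}$, which controls $\int_{|\xi|\le R}|\hat u|^2$ with no mean-zero hypothesis, the tail is bounded by $c_0^{-2}R^{-2\mu}|u|^2_{J^\mu_L}$, and the choice $R=1/(4|\Omega|)$ absorbs the low-frequency term; the density step is immediate because both sides are continuous in $\|\cdot\|_{J^\mu_L}$, and the reflection $x\mapsto a+b-x$ (cf.\ Lemma \ref{leftright}) handles the right-sided case. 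Your second route --- writing $u={_{-\infty}I_x^\mu}\left({_{-\infty}D_x^\mu}u\right)$ on $\Omega$, noting that ${_{-\infty}D_x^\mu}u$ is supported in $[a,\infty)$, and applying Young's inequality with the kernel $s^{\mu-1}/\Gamma(\mu)$ on $(0,|\Omega|)$ --- is more elementary, avoids justifying Parseval and the symbol computation, and gives the explicit constant $|\Omega|^\mu/\Gamma(\mu+1)$; it is the cleaner of the two and is what I would keep. The one thing you must add as a hypothesis is $\mu>0$: both of your arguments genuinely need it (for $\mu<0$ the bound $|\xi|^{-2\mu}\le R^{-2\mu}$ on $\{|\xi|>R\}$ reverses and the kernel $s^{\mu-1}$ is not integrable), and indeed no estimate of the form $\|u\|_{L^2}\le C\,\|{_{-\infty}I_x^{s}}u\|_{L^2}$ with $s>0$ can hold with a mesh-independent constant. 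This restriction is not cosmetic: the paper later invokes the lemma in the proof of Theorem \ref{thm1} to absorb $\|\mathscr{Q}e_p\|^2$ into $\left(\Delta_{(\alpha-2)/2}\mathscr{Q}e_p,\mathscr{Q}e_p\right)=|\mathscr{Q}e_p|^2_{J_L^{-s}}$ with $s=(2-\alpha)/2>0$, i.e.\ precisely in the negative-order regime your proof (correctly) excludes, so stating the range of $\mu$ explicitly is a genuine service to the reader.
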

From the definition of the left and right fractional integral, we obtain the following result, \\
\begin{lemma}\label{leftright}
Suppose the fractional integral is defined in $[0, b]$. Let $g(y)=f(b-x)$, then
\begin{equation}
{_xI_b^\alpha}f(x) \overset{y=b-x}{=} {_0I_y^\alpha}g(y) 
\end{equation}
 \end{lemma}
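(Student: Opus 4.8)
The plan is to prove the identity by a single change of variables that reflects the interval $[0,b]$ about its midpoint, turning the backward-looking kernel of the right integral into the forward-looking kernel of the left integral. First I would write out the right fractional integral explicitly from the defining formula, restricted to the bounded domain $[0,b]$:
\begin{equation*}
{_xI_b^\alpha}f(x) = \frac{1}{\Gamma(\alpha)}\int_x^b (t-x)^{\alpha-1}f(t)\,dt.
\end{equation*}

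Next I would introduce the substitution $t = b - s$, so that $dt = -\,ds$ and the kernel becomes $(t-x)^{\alpha-1} = (b-s-x)^{\alpha-1}$. As $t$ runs from $x$ up to $b$, the new variable $s$ runs from $b-x$ down to $0$; absorbing the sign from $dt=-\,ds$ into the reversal of the limits gives
\begin{equation*}
{_xI_b^\alpha}f(x) = \frac{1}{\Gamma(\alpha)}\int_0^{b-x}(b-x-s)^{\alpha-1}f(b-s)\,ds.
\end{equation*}
I would then set $y = b-x$, which is exactly the substitution announced in the statement, so that $b-x-s = y-s$, and read the reflected function as $g(s)=f(b-s)$ (the statement's $g(y)=f(b-x)$ is to be understood as this reflection $g(y)=f(b-y)$). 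The integral then reduces to
\begin{equation*}
{_xI_b^\alpha}f(x) = \frac{1}{\Gamma(\alpha)}\int_0^{y}(y-s)^{\alpha-1}g(s)\,ds = {_0I_y^\alpha}g(y),
\end{equation*}
which is precisely the left fractional integral of $g$ evaluated at $y$, completing the argument.

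Since the whole proof is a single substitution, there is no genuinely difficult step; the only point requiring care is the bookkeeping, namely tracking the limits of integration and the sign produced by $dt=-\,ds$, and confirming that the reflected kernel $(b-x-s)^{\alpha-1}$ aligns with the left-integral kernel $(y-s)^{\alpha-1}$ under $y=b-x$. I would also flag the apparent slip in the statement: for the identity to be dimensionally correct, $g$ must be the reflection $g(y)=f(b-y)$, not $f(b-x)$. Finally I would note that the result is purely an identity between the two integral operators and requires no regularity of $f$ beyond local integrability, so that Lemma~\ref{leftright} may later be invoked to transfer any estimate proved for the left fractional integral to its right-sided counterpart.
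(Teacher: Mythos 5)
Your proof is correct and is exactly the change-of-variables computation that the paper leaves implicit (the lemma is stated there without any written proof, as a direct consequence of the definitions of the left and right fractional integrals). You are also right to flag that the statement's $g(y)=f(b-x)$ is a typo for the reflection $g(y)=f(b-y)$, which is what the identity requires.
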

\begin{lemma}\label{approrder}
Suppose $u(x)$ is a smooth function $\in \Omega \subset \mathbb{R}$. $\Omega_h $ is a discretization of the domain with interval width $h$, $u_h(x)$ is an approximation of $u$ in $P_h^k$. $\forall$ i, $u_h(x) \in I_i$ is  a polynomial of degree up to order $k$, and $(u,v)_{I_i} = (u_h,v)_{I_i} , \forall  v\in P^k$. 
 $k$ is the degree of the polynomial. Then for  $-1<\alpha \leqslant 0$,  we have
\begin{equation*}
\|\Delta_{\alpha/2}u(x)-\Delta_{\alpha/2} u_h(x)\|_{L_2}\leqslant Ch^{k+1}
\end{equation*}
and for $ 0 \leqslant n-1< \alpha \leqslant n, k\geqslant n$ we have
\begin{equation}
\|-(-\Delta)^{\frac{\alpha}{2}}u(x) +(-\Delta)^{\frac{\alpha}{2}} u_h(x)\|_{L_2}\leqslant Ch^{k+1-n}
\end{equation}
where $C$ is a constant independent of $h$. \\
\end{lemma}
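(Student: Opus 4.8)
The plan is to reduce everything, by the linearity of the operators and the triangle inequality, to estimating the \emph{left} fractional integral of the projection error $e:=u-u_h$; the right fractional integral is handled identically after the reflection of Lemma~\ref{leftright}, and both $\Delta_{\alpha/2}$ and $-(-\Delta)^{\alpha/2}$ are, up to the constant $1/(2\cos(\cdot))$, symmetric combinations of left and right pieces. For the first estimate (set $s=-\alpha\in[0,1)$) it therefore suffices to bound $\|{}_{-\infty}I_x^{s}e\|_{L^2(\Omega)}$, and the crucial structural fact is the Galerkin orthogonality built into the hypothesis $(u,v)_{I_i}=(u_h,v)_{I_i}$, i.e.\ $e\perp P^k$ on every element.

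First I would fix $x\in I_j$ and split ${}_{-\infty}I_x^{s}e(x)=\frac{1}{\Gamma(s)}\sum_{i\le j}\int_{I_i\cap(-\infty,x)}(x-t)^{s-1}e(t)\,dt$ into a near-diagonal part (the element $I_j$ containing $x$) and a far-field part. On $I_j$ the kernel $(x-t)^{s-1}$ is only weakly singular and integrable ($s>0$), so a direct bound using $\|e\|_{L^\infty(I_j)}\le Ch^{k+1}$ together with $\int_{I_j}(x-t)^{s-1}\,dt\le Ch^{s}$ controls this term by $Ch^{k+1+s}$. On each far element $I_i$ the kernel is smooth in $t$, so I would exploit orthogonality: subtract from $(x-t)^{s-1}$ its degree-$k$ interpolant $P_i\in P^k(I_i)$, use $\int_{I_i}P_i\,e=0$, and estimate the remainder by $h^{k+1}$ times the $(k+1)$-st $t$-derivative of the kernel, which decays like $\mathrm{dist}(x,I_i)^{s-1-(k+1)}$. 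Summing over the elements, the algebraic decay makes the sum converge uniformly in $x$ and yields the pointwise bound $Ch^{k+1}$; squaring and integrating over the bounded $\Omega$ gives $\|{}_{-\infty}I_x^{s}e\|_{L^2}\le Ch^{k+1}$, the first assertion.

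For the second estimate I would use the composite representation \eqref{defsplit}--\eqref{split2}, which writes $-(-\Delta)^{\alpha/2}$ (for $n-1<\alpha\le n$, $s:=n-\alpha\in[0,1)$) as $D^{n}$ applied to the symmetric fractional integral of order $n-\alpha$, so that $-(-\Delta)^{\alpha/2}e = D^{n}\big(\Delta_{(\alpha-n)/2}e\big)$. The inner quantity $w:=\Delta_{(\alpha-n)/2}e$ is exactly the object controlled by the first estimate (applied with exponent $\alpha-n\in(-1,0)$), so $\|w\|_{L^2}\le Ch^{k+1}$; the remaining task is to pass the $n$ derivatives through at the cost of exactly $n$ powers of $h$, producing the stated rate $Ch^{k+1-n}$. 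The cleanest route is to repeat the kernel-splitting argument directly on ${}_{-\infty}D_x^{\alpha}e=D^{n}{}_{-\infty}I_x^{n-\alpha}e$: the far-field elements are handled by orthogonality as before (each of the $n$ extra $x$-derivatives differentiates the smooth kernel and costs a factor $\mathrm{dist}(x,I_i)^{-1}$, still leaving a convergent sum), while the near-diagonal element is treated via the Riemann--Liouville derivative together with the local inverse inequality $\|D^{n}(\cdot)\|\le Ch^{-n}\|\cdot\|$ on the piecewise-polynomial part.

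I expect the main obstacle to be precisely this near-diagonal term in the second estimate: unlike in the first part the kernel $(x-t)^{n-1-\alpha}$ cannot simply be integrated after $n$ differentiations, and since $u_h$ is only the $L^2$ projection of $u$ (so $D^{n}u_h$ is \emph{not} the projection of $D^{n}u$), one cannot directly invoke the first estimate on $D^{n}e$. Controlling this term, and verifying that the element-wise contributions sum to the claimed global rate rather than accumulating an extra logarithmic or $h^{-1/2}$ loss, is where the careful bookkeeping lies; by comparison the far-field summability and the weakly singular near-diagonal bound of the first estimate are routine.
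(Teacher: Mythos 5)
Your treatment of the first estimate is correct but substantially more elaborate than what the paper does, and the extra machinery is what leads you astray in the second part. The paper's proof of the case $-1<\alpha\leqslant 0$ is essentially one line: since the kernel $(x-s)^{-\alpha-1}$ of ${}_{a}I_x^{-\alpha}$ is integrable on the bounded domain $\Omega=[a,b]$ (the exponent $-\alpha-1>-1$), the fractional integral is a bounded operator, so $\|{}_{a}I_x^{-\alpha}e\|\leqslant C_{a,b}\|e\|\leqslant Ch^{k+1}$ directly from the classical projection estimate $\|u-u_h\|=\mathcal{O}(h^{k+1})$; the right integral follows by the reflection of Lemma \ref{leftright}. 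No element-by-element splitting and, crucially, no use of the orthogonality $(u-u_h)\perp P^k$ is needed. Your near-field/far-field decomposition with the degree-$k$ interpolant of the kernel does also yield $Ch^{k+1}$, so part one of your argument is sound, just unnecessarily heavy.

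For the second estimate there is a genuine gap, and it is the one you yourself flag. You factor the operator as $D^n\bigl(\Delta_{(\alpha-n)/2}e\bigr)$, i.e. derivative outside, and then need to "pass $n$ derivatives through at the cost of $n$ powers of $h$"; the inverse inequality of Lemma \ref{inverse} applies only to the piecewise-polynomial part of $e$, not to $u$ itself, and your near-diagonal term is left unresolved. The paper avoids this entirely by using the \emph{other} ordering from \eqref{defsplit}: $-(-\Delta)^{\alpha/2}e=\Delta_{(\alpha-n)/2}\bigl(D^n e\bigr)$, combined with the standard elementwise estimate $\|D^n u-D^n u_h\|\leqslant Ch^{k+1-n}$ and then the boundedness of the fractional integral established in the first part. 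Note that this route does not require $D^n u_h$ to be the $L^2$ projection of $D^n u$ -- your stated worry on that point only arises because your proof of part one leaned on orthogonality, which the boundedness argument never uses. With the integral-outside factorization your proof closes immediately; with the derivative-outside factorization as written, it does not.
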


\begin{proof} 
Case $-1<\alpha \leqslant 0$,

First, we consider the approximation error for a fractional integral $\|\prescript{}{a}I^{-\alpha}_x u(x) - \prescript{}{a}I^{-\alpha}_x u_h(x)\| $. 

First recall that $\|u(x)-u_h(x)\|_2 = \mathcal{O}(h^{k+1})$ from classical approximation theory. Suppose $x\in \Omega_{h,i}$. We only need to consider $r^i=\prescript{}{a}I^{-\alpha}_x (\mathcal{O}(h^{k+1}))$. 
\begin{eqnarray*}
r^i&= &\frac{1}{\Gamma(-\alpha)}\int_{a}^x(x-s)^{-\alpha-1} \mathcal{O}(h^{k+1}) ds\\  
& \leqslant & \frac{(b-a)^{-\alpha}\mathcal{O}(h^{k+1})}{\Gamma(1-\alpha)}
\end{eqnarray*}
Recalling Lemma \ref{leftright}, the case $-1<\alpha \leqslant 0$ is proved.

The case $0\leqslant n-1<\alpha<n$, recall that $\|\frac{\partial^n u}{\partial x^n} - \frac{\partial^n u_h}{\partial x^n}\| \leqslant Ch^{k+1-n}$ and use the result for case $-1<\alpha \leqslant 0$, we obtain,
\begin{equation}
\|-(-\Delta)^{\frac{\alpha}{2}}u(x) +(-\Delta)^{\frac{\alpha}{2}} u_h(x)\|_{L_2}\leqslant Ch^{k+1-n}
\end{equation}
This proves the Lemma.
\end{proof}
\\ \begin{lemma}{(Inverse properties\cite{inverse})}\label{inverse}
Suppose $V_h$ is  a finite element space spanned by polynomials up to degree $k$. For any $\omega_h\in V_h$, there exists a positive constant $C$ in dependent of $u_h$ and $h$, such that
\begin{eqnarray*}
\|\partial_x u_h\|\leqslant Ch^{-1}\|u_h\|,\quad \|u_h\|_{\Gamma_h}\leqslant Ch^{-\frac{1}{2}}\|u_h\|
\end{eqnarray*}
 
\end{lemma}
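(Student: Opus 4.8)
The plan is to reduce both estimates to a single element by an affine change of variables onto a fixed reference interval, where all norms on the finite-dimensional space of polynomials of degree at most $k$ are equivalent, and then to bookkeep the powers of $h$ introduced by the scaling. Because the estimates are local, it suffices to prove them on a generic element $I_i$ of width $h$ and then sum the squared contributions over the mesh.

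First I would fix an element $I_i$ and introduce the affine map $x = x_i + h\hat{x}$ carrying the unit reference interval $\hat{I}$ onto $I_i$, and write $\hat{u}(\hat{x}) = u_h(x)$. Computing the Jacobian gives the scaling $\|u_h\|_{L^2(I_i)} = h^{1/2}\|\hat{u}\|_{L^2(\hat{I})}$, while the chain rule $\partial_x u_h = h^{-1}\partial_{\hat{x}}\hat{u}$ yields $\|\partial_x u_h\|_{L^2(I_i)} = h^{-1/2}\|\partial_{\hat{x}}\hat{u}\|_{L^2(\hat{I})}$. Since $\hat{u}$ ranges over the fixed finite-dimensional space of polynomials of degree $\leqslant k$ on $\hat{I}$, the maps $\hat{u}\mapsto\|\partial_{\hat{x}}\hat{u}\|_{L^2(\hat{I})}$ and $\hat{u}\mapsto\|\hat{u}\|_{L^2(\hat{I})}$ are (semi)norms on that space, so there is a constant $\hat{C}=\hat{C}(k)$, independent of $h$ and of $u_h$, with $\|\partial_{\hat{x}}\hat{u}\|_{L^2(\hat{I})}\leqslant \hat{C}\|\hat{u}\|_{L^2(\hat{I})}$. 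Combining the three relations gives $\|\partial_x u_h\|_{L^2(I_i)}\leqslant \hat{C}h^{-1}\|u_h\|_{L^2(I_i)}$, and summing over all elements produces the first inequality. For the trace estimate I would argue identically, replacing the gradient bound by the reference-element trace bound $\|\hat{u}\|_{\partial\hat{I}}\leqslant\hat{C}\|\hat{u}\|_{L^2(\hat{I})}$, which again holds by finite dimensionality; since point evaluation is invariant under the affine map, $\|u_h\|_{\partial I_i}=\|\hat{u}\|_{\partial\hat{I}}$, so the only scaling factor is the $h^{-1/2}$ from $\|\hat{u}\|_{L^2(\hat{I})}=h^{-1/2}\|u_h\|_{L^2(I_i)}$, and summing over the element boundaries comprising $\Gamma_h$ gives the second inequality.

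The argument is routine once the reference-element framework is in place; the only point requiring care, and the crux of the estimate, is the uniformity of $\hat{C}$ in $h$. This is precisely what the reduction to the fixed reference interval secures, because the equivalence of norms on the finite-dimensional polynomial space is entirely $h$-independent and is applied before scaling back. One implicitly uses that the mesh is quasi-uniform, so that every physical element is comparable to the same reference element through a single constant.
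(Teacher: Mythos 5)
Your scaling argument is correct and is the canonical proof of this inverse inequality: the paper itself offers no proof, simply citing Ciarlet's book, and the argument given there is exactly your reduction to a fixed reference interval, equivalence of norms (and domination of seminorms by norms) on the finite-dimensional space $P^k$, and bookkeeping of the Jacobian powers $h^{1/2}$ and $h^{-1}$. Your closing caveat is also apt: with a single global $h$ as written in the lemma, one does implicitly need quasi-uniformity (or one should read $h$ as the local width $\Delta x_i$), and in one dimension no further shape-regularity is required since every element is affinely equivalent to $\hat{I}$.
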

 
 \section{ LDG scheme for the fractional convection-diffusion equation}

Let us consider the fractional convection-diffusion equation with $1<\alpha<2$. From  \eqref{fracdef} and Lemma \ref{approrder}, we know that a direct approximation of the fractional laplacian operator is of order $k+1-n$. To obtain a high order discontinuous Galerkin scheme for the fractional derivative, we  rewrite the fractional derivative as a composite of first order derivatives and fractional integral and convert the equation to a low order system.

Following \eqref{split2}, we introduce two variables $p,q$, and  set  
\begin{eqnarray*}
q &= & \Delta_{(\alpha-2)/2}p\\
p &=&\sqrt{\varepsilon}\frac{\partial}{\partial x}u
\end{eqnarray*}
 Then, the fractional convection-diffusion problem can be rewritten as
\begin{eqnarray*}
\left\{
\begin{array}{l}
\frac{\partial u}{\partial t} + \frac{\partial}{\partial x}f(u) = \sqrt{\varepsilon}\frac{\partial}{\partial x}q \\
q =  \Delta_{(\alpha-2)/2}p\\
p =\sqrt{\varepsilon}\frac{\partial}{\partial x}u\\
\end{array}\right.
\end{eqnarray*}
Consider the equation in  $\Omega=[a,b]$. Given a partition $a=x_\frac{1}{2}<x_\frac{3}{2}<\cdots<x_{K+\frac{1}{2}}=b$, we denote the mesh by $I_j = [x_{j-\frac{1}{2}},x_{j+\frac{1}{2}}]$,  $\Delta x_j = x_{j+\frac{1}{2}}-x_{j-\frac{1}{2}}$.

We consider the solution in a polynomial space $V_h$, which is certainly embedded into the fractional space according to theorem \ref{embedded}.  The piece-wise polynomial space $V_h$ on the mesh is defined as,
$$V_h = \{v: v\in P^k(I_j), x \in I_j\}$$

We seek an approximation $(u_h,p_h,q_h)\in V_h$ to $(u,p,q)$ such that, for any $v,w,z \in V_h$, we have
\begin{eqnarray}\label{fracvar}
\left\{
\begin{array}{l}
\left(\frac{\partial u_h(x,t)}{\partial t}, v(x) \right)_{I_i} +  \left(\frac{\partial }{\partial x}f(u_h),v(x)\right)_{I_i}  = \sqrt{\varepsilon} \left(\frac{\partial q_h}{\partial x},v(x)  \right)_{I_i}\\
\left(q_h, w(x) \right)_{I_i}  =  \left( \Delta_{(\alpha-2)/2}p_h,  w(x) \right)_{I_i} \\
\left(p_h, z(x) \right)_{I_i}  =\sqrt{\varepsilon}\left(\frac{\partial u_h}{\partial x},  z(x) \right)_{I_i}\\
\left(u_h(x,0),v(x)\right)_{I_i}=\left(u_0(x),v(x)\right)_{I_i}
\end{array}\right.
\end{eqnarray}
To complete the LDG scheme,  we introduce some notations and the numerical flux. Define  $$u^{\pm}(x_j) = \lim_{x\to x^{\pm}_j}u(x), \;\; 
\bar{u}_i=\frac{u^++u^-}{2},\quad \llbracket u\rrbracket = u^+-u^-$$
and the numerical flux as, 
$$\hat{u} = h_u(u^-,u^+),\quad \hat{q} = h_q(q^-,q^+),\quad \hat{f_h} = \hat{f}(u_h^-,u_h^+)$$
For the high order derivative part, a good choice is the alternating direction flux\cite{LDG1,shu0}, defined as,
$$\hat{u}_{i+\frac{1}{2}} = u^-_{i+\frac{1}{2}},  \quad \hat{q}_{i+\frac{1}{2}}=q^+_{i+\frac{1}{2}},   0\leqslant i \leqslant K-1 $$ or $$ \hat{u}_i = u^+_{i+\frac{1}{2}}, \quad\hat{q}_i=q^-_{i+\frac{1}{2}},1\leqslant i \leqslant K$$ 
For the nonlinear part, $\hat{f}$, any monotone flux can be used \cite{Jan}.

Applying integration by parts to \eqref{fracvar},  and replacing the fluxes at the interfaces by the corresponding numerical fluxes, we obtain, 

\begin{eqnarray}
\label{sch1}\left((u_h)_t, v\right)_{I_i} + \left(\hat{f_h}v-\sqrt{\varepsilon}\hat{q_h}v\right)|^{x^-_{i+\frac{1}{2}}}_{x^+_{i-\frac{1}{2}}} -\left(f(u_h)-\sqrt{\varepsilon}q_h,v_x\right)_{I_i} & =& 0\\
\label{sch2}\left(q_h, w(x) \right)_{I_i} - \left( \Delta_{(\alpha-2)/2}p_h,  w(x) \right)_{I_i}& = & 0 \\
\label{sch3}\left(p_h, z(x) \right)_{I_i}  - \sqrt{\varepsilon}\hat{u_h}z|^{x^-_{i+\frac{1}{2}}}_{x^+_{i-\frac{1}{2}}} +\sqrt{\varepsilon}\left(u_h,  z_x \right)_{I_i}&=&0\\
\label{sch4}\left(u_h(x,0),v(x)\right)_{I_i}-\left(u_0(x),v(x)\right)_{I_i}&=&0
\end{eqnarray}

\begin{remark}Originally, the problem is defined in $\mathbb{R}$. However, for numerical purposes, we assume there existing a domain $\Omega=[a,b]\subset \mathbb{R}$ in which $u$\ has compact support and restrict the problem to this domain $\Omega$. As a consequence, we impose homogeneous Dirichlet boundary conditions for all $u\in \mathbb{R}\backslash\Omega$. to recover
\begin{eqnarray}
-(-\Delta)^{\alpha/2}u(x) =-\frac{{_{-\infty}D_x^\alpha}u(x) + {_xD_{\infty}^\alpha}u(x)}{2\cos\left(\frac{\alpha\pi}{2}\right)} = -\frac{{_{a}D_x^\alpha}u(x) + {_xD_{b}^\alpha}u(x)}{2\cos\left(\frac{\alpha\pi}{2}\right)}
\end{eqnarray}
\end{remark}
For the flux at the boundary, we use the flux introduced in \cite{castillo}, defined as
$$\hat{u}_{K+\frac{1}{2}}=u(b,t),\quad \hat{q}_{K+\frac{1}{2}}=q^-_{K+\frac{1}{2}}+\frac{\beta}{h}\llbracket u_{K+\frac{1}{2}}\rrbracket$$
or $$\hat{u}_{\frac{1}{2}}=u(a,t),\quad \hat{q}_{\frac{1}{2}}=q^-_{\frac{1}{2}}+\frac{\beta}{h}\llbracket u_{\frac{1}{2}}\rrbracket$$
where $\beta$ is a positive constant.

\section{Stability and error estimates}
In the following we shall discuss stability and accuracy of the proposed scheme, both for the fractional diffusion problem and the more general convection-diffusion problem.

\subsection{Stability}
In order to carry on analysis of the LDG scheme, we define
\begin{eqnarray}\label{bilinear}
\nonumber \mathscr{B}(u,p,q; v, w, z) &=&\int_{0}^T\sum_{i=1}^K \left(u_t, v\right)_{I_i}dt + \int_{0}^T\sum_{i=1}^K\left(\hat{f}v-\sqrt{\varepsilon}\hat{q}v\right)|^{x^-_{i+\frac{1}{2}}}_{x^+_{i-\frac{1}{2}}}dt   \\
&&-\int_{0}^T\sum_{i=1}^K\left(f(u)-\sqrt{\varepsilon}q,v_x\right)_{I_i}dt+\int_{0}^T\sum_{i=1}^K\left(q, w(x) \right)_{I_i}dt \\
\nonumber&&- \int_{0}^T\sum_{i=1}^K\left(\Delta_{(\alpha-2)/2}p,  w(x) \right)_{I_i}dt- \int_{0}^T\sum_{i=1}^K\sqrt{\varepsilon}\hat{u}z|^{x^-_{i+\frac{1}{2}}}_{x^+_{i-\frac{1}{2}}}dt \\
 \nonumber && +\int_{0}^T\sum_{i=1}^K\left(p, z(x) \right)_{I_i}dt+\int_{0}^T\sum_{i=1}^K\sqrt{\varepsilon}\left(u,  z_x \right)_{I_i} dt-\int_{0}^T\mathscr{L}(v, w, z)dt
\end{eqnarray}
where $\mathscr{L}$ contains the boundary term, defined as
\begin{equation}
\mathscr{L}(v, w, z) =   \sqrt{\varepsilon} u(a,t) z^+_\frac{1}{2}- \frac{\sqrt{\varepsilon}\beta}{h}  u(b,t) v^-_{K+\frac{1}{2}} dt +\sqrt{\varepsilon} u(b,t)z^-_{K+\frac{1}{2}} =0
\end{equation}
If $(u,p,q)$ is a solution, then $\mathscr{B}(u,p,q; v, w, z) =0$ for any $ (v,w,z)$.
Considering the fluxes $\hat{u}_{i+\hf}=u^-_{i+\hf},\hat{q}_{i+\hf}=q^+_{i+\hf}$ and the flux at the boundaries we recover,
\begin{eqnarray}\label{B2}
\nonumber \mathscr{B}(u,p,q; v, w, z) &=& \int_{0}^T\sum_{i=1}^K \left(u_t, v\right)_{I_i} dt-\int_{0}^T\sum_{i=1}^K\left(f(u),v_x\right)_{I_i}dt \\
\nonumber &&+\int_{0}^T\sum_{i=1}^K\left(\sqrt{\varepsilon}q,v_x\right)_{I_i}dt +\int_{0}^T\sum_{i=1}^K\sqrt{\varepsilon}\left(u,  z_x \right)_{I_i} dt +\int_{0}^T\sum_{i=1}^K\left(q, w(x) \right)_{I_i}dt\\
\nonumber && - \int_{0}^T\sum_{i=1}^K\left(\Delta_{(\alpha-2)/2}p,  w(x) \right)_{I_i} dt+\int_{0}^T\sum_{i=1}^K\left(p, z(x) \right)_{I_i}dt \\
&&-\int_{0}^T\sum_{i=1}^{K-1}{\hat{f}_{i+\frac{1}{2}}}\llbracket v\rrbracket_{i+\frac{1}{2}} dt +\int_{0}^T\sum_{i=1}^{K-1}\sqrt{\varepsilon}{q^+_{i+\frac{1}{2}}}\llbracket v\rrbracket_{i+\frac{1}{2}} dt \\
\nonumber &&+\int_{0}^T\sum_{i=1}^{K-1}\sqrt{\varepsilon}{u^-_{i+\frac{1}{2}}}\llbracket z\rrbracket_{i+\frac{1}{2}} dt -\int_{0}^T (\hat{f}_{\frac{1}{2}}v^+_\frac{1}{2}-\hat{f}_{K+\frac{1}{2}}v^-_{K+\frac{1}{2}}) dt\\
\nonumber&& +\int_{0}^T  \sqrt{\varepsilon} (q^+_{\frac{1}{2}}v^+_\frac{1}{2}-  q^-_{K+\frac{1}{2}}v^-_{K+\frac{1}{2}})dt  +\int_{0}^T \frac{\sqrt{\varepsilon}\beta}{h} u^-_{K+\frac{1}{2}}v^-_{K+\frac{1}{2}} dt 
\end{eqnarray}

\begin{lemma}\label{lem41} Set $(v,w,z)=(u,-p,q)$ in \eqref{B2}, and define $\Phi(u)=\int^u f(u)du$. Then the following result holds,
\begin{eqnarray*}
\mathscr{B}(u,p,q; u, -p, q) &=& \|u(x,T)\|^2-\|u_0\|^2+\int_{0}^T (\Delta_{(\alpha-2)/2}p,p)dt + \int_{0}^T \frac{\sqrt{\varepsilon}\beta}{h} (u^-_{K+\frac{1}{2}})^2 dt \\
&&+ \int_{0}^T\Phi(u)_{\frac{1}{2}}- \Phi(u)_{K+\frac{1}{2}}- (\hat{f}u)_{\frac{1}{2}}+(\hat{f}u)_{K+\frac{1}{2}}dt\\
&&+\int_{0}^T\sum_{j=1}^{K-1} \left(\llbracket\Phi(u)\rrbracket_{j+\frac{1}{2}} - \hat{f}\llbracket u\rrbracket_{j+\frac{1}{2}}\right)dt 
\end{eqnarray*}

\end{lemma}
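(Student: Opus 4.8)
The plan is to substitute $(v,w,z)=(u,-p,q)$ directly into the expanded bilinear form \eqref{B2} and sort the resulting terms into three independent groups that do not genuinely interact: the temporal term $(u_t,v)_{I_i}$, the diffusion/fractional-integral terms carrying the factor $\sqrt{\varepsilon}$, and the nonlinear convection terms carrying $f$ and $\hat f$. I would then show that each group collapses to exactly one line of the claimed identity. For the temporal term I would use $\sum_{i=1}^K(u_t,u)_{I_i}=\hf\frac{d}{dt}\|u\|^2$ and integrate in time to obtain the $\|u(x,T)\|^2-\|u_0\|^2$ difference of the first line. For the fractional-integral part, the two products $(q,-p)_{I_i}$ (from the $w$-equation) and $(p,q)_{I_i}$ (from the $z$-equation) cancel immediately by symmetry of the $L^2$ inner product; the term $-(\Delta_{(\alpha-2)/2}p,w)_{I_i}$ with $w=-p$ becomes $+(\Delta_{(\alpha-2)/2}p,p)_{I_i}$ and sums to $\int_0^T(\Delta_{(\alpha-2)/2}p,p)\,dt$; and the penalty term reproduces $\frac{\sqrt{\varepsilon}\beta}{h}(u^-_{K+\hf})^2$ verbatim.

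The heart of the computation, and the step I expect to be the main obstacle, is showing that all the remaining $\sqrt{\varepsilon}$ cross-terms cancel exactly: the volume terms $(\sqrt{\varepsilon}q,u_x)_{I_i}+\sqrt{\varepsilon}(u,q_x)_{I_i}$, the interior flux terms $\sqrt{\varepsilon}q^+_{i+\hf}\llbracket u\rrbracket_{i+\hf}+\sqrt{\varepsilon}u^-_{i+\hf}\llbracket q\rrbracket_{i+\hf}$, and the boundary flux term $\sqrt{\varepsilon}(q^+_\hf u^+_\hf-q^-_{K+\hf}u^-_{K+\hf})$. I would combine the two volume terms into $\sum_i\int_{I_i}(qu)_x\,dx$ by the product rule, let the resulting telescoping sum collapse to the interior jumps $-\sum_{j=1}^{K-1}\llbracket qu\rrbracket_{j+\hf}$ plus the endpoint values $(qu)^-_{K+\hf}-(qu)^+_\hf$, and then invoke the algebraic flux identity $q^+\llbracket u\rrbracket+u^-\llbracket q\rrbracket=\llbracket qu\rrbracket$ to annihilate the interior jumps against the interface flux terms. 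The leftover endpoint values are then exactly cancelled by the boundary flux term, giving a net contribution of zero. The delicate points here are the correct index ranges in the telescoping — distinguishing left traces $(qu)^-$ at nodes $\tfrac32,\dots,K+\hf$ from right traces $(qu)^+$ at nodes $\hf,\dots,K-\hf$ — and the sign bookkeeping of the alternating fluxes $\hat u=u^-$, $\hat q=q^+$.

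Finally, for the convection group I would use $\Phi'(u)=f(u)$, so that $(f(u),u_x)_{I_i}=\int_{I_i}\frac{d}{dx}\Phi(u)\,dx$ telescopes across the elements to $\sum_{j=1}^{K-1}\llbracket\Phi(u)\rrbracket_{j+\hf}$ together with the boundary values $\Phi(u)_\hf-\Phi(u)_{K+\hf}$. Combining the interior part with the interface terms $-\hat f_{j+\hf}\llbracket u\rrbracket_{j+\hf}$ yields the final line $\sum_{j=1}^{K-1}\big(\llbracket\Phi(u)\rrbracket_{j+\hf}-\hat f\llbracket u\rrbracket_{j+\hf}\big)$, and combining the boundary part with $-(\hat f u)_\hf+(\hat f u)_{K+\hf}$ produces the middle line. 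Adding the three groups then assembles the stated identity. The only real risk is a sign or index slip in the two telescoping arguments, so I would verify both by writing out the $K=2$ case explicitly before asserting the general collapse.
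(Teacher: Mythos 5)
Your proposal is correct and follows essentially the same route as the paper's proof: the product rule/integration by parts on the $\sqrt{\varepsilon}$ volume terms, telescoping combined with the identity $q^{+}\llbracket u\rrbracket+u^{-}\llbracket q\rrbracket=\llbracket qu\rrbracket$ to cancel the interior and boundary flux contributions, and the antiderivative $\Phi$ to telescope the convection terms. The only caveat is the factor of $\tfrac{1}{2}$: your own identity $\sum_{i}(u_t,u)_{I_i}=\tfrac{1}{2}\tfrac{d}{dt}\|u\|^{2}$ integrates to $\tfrac{1}{2}\left(\|u(x,T)\|^{2}-\|u_0\|^{2}\right)$ rather than $\|u(x,T)\|^{2}-\|u_0\|^{2}$, a slip that is also present in the lemma as stated (the paper itself restores the $\tfrac{1}{2}$ when it invokes the lemma later).
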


\begin{proof}
Set $(v,w,z)=(u,-p,q)$ in \eqref{B2}, and consider the integration by parts formula $(q,u_x)_{I_i}+(u,q_x)_{I_i}=(uq)|_{x^+_{i-\hf}}^{x^-_{i+\hf}}$, to recover at an interface
\begin{eqnarray*}
& &\sum_{i=1}^K\left(\sqrt{\varepsilon}q,v_x\right)_{I_i} +\sum_{i=1}^K\left(\sqrt{\varepsilon}u,  z_x \right)_{I_i}+\sum_{i=1}^{K-1}\sqrt{\varepsilon}{q^+_{i+\frac{1}{2}}}\llbracket v\rrbracket_{i+\frac{1}{2}} 
+\sum_{i=1}^{K-1}\sqrt{\varepsilon}{u^-_{i+\frac{1}{2}}}\llbracket z\rrbracket_{i+\frac{1}{2}}\\
&=& \sum_{i=1}^K\sqrt{\varepsilon}\left(uq\right)|^{x^-_{i+\hf}}_{x^+_{i-\hf}}+\sum_{i=1}^{K-1}\sqrt{\varepsilon}{q^+_{i+\frac{1}{2}}}\llbracket u\rrbracket_{i+\frac{1}{2}} 
+\sum_{i=1}^{K-1}\sqrt{\varepsilon}{u^-_{i+\frac{1}{2}}}\llbracket q\rrbracket_{i+\frac{1}{2}}\\
&=& -\sqrt{\varepsilon} q^+_{\frac{1}{2}}u^+_\frac{1}{2}+ \sqrt{\varepsilon}  q^-_{K+\frac{1}{2}}u^-_{K+\frac{1}{2}}
\end{eqnarray*}
Then
\begin{eqnarray}\label{Bdiff}
\nonumber \mathscr{B}(u,p,q; u, -p, q) &=& \int_{0}^T\sum_{i=1}^K \left(u_t, u\right)_{I_i} dt-\int_{0}^T\sum_{i=1}^K\left(f(u),u_x\right)_{I_i}dt \\
&& - \int_{0}^T\sum_{i=1}^K\left(\Delta_{(\alpha-2)/2}p,  p \right)_{I_i} dt-\int_{0}^T\sum_{i=1}^{K-1}{\hat{f}_{i+\frac{1}{2}}}\llbracket v\rrbracket_{i+\frac{1}{2}} dt\\
\nonumber&& -\int_{0}^T (\hat{f}_{\frac{1}{2}}u^+_\frac{1}{2}-\hat{f}_{K+\frac{1}{2}}u^-_{K+\frac{1}{2}}) dt +\int_{0}^T \frac{\sqrt{\varepsilon}\beta}{h} \left(u^-_{K+\frac{1}{2}}\right)^2 dt 
\end{eqnarray}
Define $\Phi(u)=\int^uf(u)du$, then 
\begin{eqnarray}\label{phiu}
\sum_{i=1}^K\left(f(u),u_x\right)_{I_i}=\sum_{i=1}^K\Phi(x)|_{x^+_{i-\hf}}^{x^-_{i+\hf}} = -\sum_{i=1}^{K-1}\llbracket \Phi(u)\rrbracket_{i+\frac{1}{2}} -\Phi(u)_{\frac{1}{2}}+\Phi(u)_{K+\frac{1}{2}}
\end{eqnarray}
Combining  \eqref{Bdiff} and \eqref{phiu} proves the Lemma.
\end{proof} \\

\begin{theorem}\label{stability}
The semi-discrete scheme \eqref{sch1}-\eqref{sch4} is stable, and $\|u_h(x,T)\|\leqslant \|u_0(x)\|$ for any $T>0$. \\
\end{theorem}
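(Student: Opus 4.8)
The plan is to use that the discrete solution annihilates the bilinear form and then to read off the energy identity from Lemma~\ref{lem41}. Since $(u_h,p_h,q_h)\in V_h$ solves \eqref{sch1}--\eqref{sch4}, we have $\mathscr{B}(u_h,p_h,q_h;v,w,z)=0$ for every $(v,w,z)\in V_h$; in particular the admissible choice $(v,w,z)=(u_h,-p_h,q_h)$ gives $\mathscr{B}(u_h,p_h,q_h;u_h,-p_h,q_h)=0$. Feeding this into the identity of Lemma~\ref{lem41} (with $u,p,q$ replaced by $u_h,p_h,q_h$) and solving for the terminal energy yields
\begin{eqnarray*}
\|u_h(x,T)\|^2 &=& \|u_0\|^2 - \int_0^T (\Delta_{(\alpha-2)/2}p_h,p_h)\,dt - \int_0^T \frac{\sqrt{\varepsilon}\beta}{h}\big((u_h)^-_{K+\hf}\big)^2\,dt \\
&& -\int_0^T\sum_{j=1}^{K-1}\left(\llbracket\Phi(u_h)\rrbracket_{j+\hf}-\hat{f}\llbracket u_h\rrbracket_{j+\hf}\right)dt \\
&& -\int_0^T\left(\Phi(u_h)_{\hf}-\Phi(u_h)_{K+\hf}-(\hat{f}u_h)_{\hf}+(\hat{f}u_h)_{K+\hf}\right)dt ,
\end{eqnarray*}
so it suffices to show that each of the four subtracted quantities on the right is nonnegative. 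Note that the choice $w=-p_h$ is exactly what makes the $(q_h,p_h)$ cross terms cancel and leaves the diffusion term with a favorable sign.

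The fractional and penalty terms are immediate. Extending $p_h$ by zero outside $\Omega$ in accordance with the homogeneous Dirichlet data, Lemma~\ref{fracnorm2} identifies $(\Delta_{(\alpha-2)/2}p_h,p_h)$ with a fractional seminorm of $p_h$ squared (here $(\alpha-2)/2$ lies in the admissible range since $1<\alpha<2$), whence this term is nonnegative. The penalty contribution $\frac{\sqrt{\varepsilon}\beta}{h}\big((u_h)^-_{K+\hf}\big)^2$ is manifestly nonnegative because $\varepsilon,\beta,h>0$.

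The crux is the sign of the convective flux terms. Using $\Phi'=f$, I rewrite each interior contribution as $\llbracket\Phi(u_h)\rrbracket_{j+\hf}-\hat{f}\llbracket u_h\rrbracket_{j+\hf}=\int_{u_h^-}^{u_h^+}\big(f(s)-\hat{f}(u_h^-,u_h^+)\big)\,ds$. Because $\hat{f}$ is monotone, it is an E-flux: when $u_h^-<u_h^+$ one has $\hat{f}(u_h^-,u_h^+)\le f(s)$ for all intermediate $s$, and the inequality reverses when $u_h^->u_h^+$; in either case the integral is nonnegative, so each interior interface term is $\geqslant 0$. The two boundary terms are handled identically. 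Fixing the constant in $\Phi$ by $\Phi(0)=0$ (this leaves the identity unchanged, the constant cancelling in $\Phi(u_h)_{\hf}-\Phi(u_h)_{K+\hf}$), and using that compact support forces the exterior traces $(u_h)^-_{\hf}=(u_h)^+_{K+\hf}=0$, the left contribution becomes $\int_{0}^{(u_h)^+_{\hf}}\big(f(s)-\hat{f}(0,(u_h)^+_{\hf})\big)\,ds$ and the right one its mirror image at $x=b$; these are precisely interface terms against a ghost state $0$, hence nonnegative by the same E-flux estimate.

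Collecting these four signs gives $\|u_h(x,T)\|^2\leqslant\|u_0\|^2$ for every $T>0$, which is the asserted stability. The step I expect to be the genuine obstacle is this last sign analysis of the nonlinear flux terms: it is there that one must invoke the E-flux characterization of monotone fluxes and, for the two boundary contributions, verify that the homogeneous Dirichlet/compact-support data let them inherit exactly the interior structure through the ghost-state interpretation. By contrast, once Lemma~\ref{fracnorm2} is in hand the fractional diffusion term is positive almost for free, provided one keeps track of the fact that its sign is tied to the specific test choice $w=-p_h$.
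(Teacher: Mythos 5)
Your proposal is correct and follows essentially the same route as the paper: Galerkin orthogonality with the test choice $(u_h,-p_h,q_h)$, the energy identity of Lemma~\ref{lem41}, Lemma~\ref{fracnorm2} for the fractional term, and the sign of the monotone-flux interface terms. You simply make explicit two steps the paper only asserts, namely the E-flux integral representation $\llbracket\Phi\rrbracket-\hat{f}\llbracket u\rrbracket=\int_{u^-}^{u^+}(f(s)-\hat{f})\,ds\geqslant 0$ and the ghost-state treatment of the two boundary terms, which is a welcome clarification rather than a deviation.
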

\begin{proof}
Using the properties of the monotone flux, $\hat{f}(u^-,u^+)$ is a non-decreasing function of its first argument, and a non-increasing function of its second argument. Hence, we have $\llbracket\Phi(u_h)\rrbracket_{j+\frac{1}{2}} - \hat{f_h}\llbracket u_h\rrbracket_{j+\frac{1}{2}}>0, 1 \leqslant j \leqslant K-1$. By Galerkin orthogonality, $\mathscr{B}(u_h,p_h,q_h; u_h, -p_h, q_h)=0$, Lemma \ref{lem41} yields
\begin{eqnarray*}
&&\|u(x,T)\|^2-\|u_0\|^2+\int_{0}^T (\Delta_{(\alpha-2)/2}p,p)dt + \int_{0}^T \frac{\sqrt{\varepsilon}\beta}{h} (u^-_{K+\frac{1}{2}})^2 dt \\
&&+ \int_{0}^T\Phi(u)_{\frac{1}{2}}- \Phi(u)_{K+\frac{1}{2}}- (\hat{f}u)_{\frac{1}{2}}+(\hat{f}u)_{K+\frac{1}{2}}dt \leqslant 0
\end{eqnarray*}

Considering the boundary condition and Lemma \ref{fracnorm2}, we obtain $\|u_h(x,T)\|\leqslant \|u_0(x)\|$, hence completing the proof.
\end{proof}

\subsection{Error estimates} 

To estimate the error, we firstly consider fractional diffusion with the Laplacian operator, i.e., the case with $f=0$. For fractional diffusion,  \eqref{sch1}-\eqref{sch4} reduce to
\begin{eqnarray}
\label{dfsch1}\left((u_h)_t, v\right)_{I_i} -(\sqrt{\varepsilon}\hat{q_h}v)|^{x^-_{i+\frac{1}{2}}}_{x^+_{i-\frac{1}{2}}} +\left(\sqrt{\varepsilon}q_h,v_x\right)_{I_i} & =& 0\\
\label{dfsch2}\left(q_h, w(x) \right)_{I_i} - \left( \Delta_{(\alpha-2)/2}p_h,  w(x) \right)_{I_i}& = & 0 \\
\label{dfsch3}\left(p_h, z(x) \right)_{I_i}  - \sqrt{\varepsilon}\hat{u_h}z|^{x^-_{i+\frac{1}{2}}}_{x^+_{i-\frac{1}{2}}} +\sqrt{\varepsilon}\left(u_h,  z_x \right)_{I_i}&=&0\\
\label{dfsch4}\left(u_h(x,0),v(x)\right)_{I_i}-\left(u_0(x),v(x)\right)_{I_i}&=&0
\end{eqnarray}
Correspondingly, we have the compact form of the scheme as

\begin{eqnarray} 
\nonumber \mathscr{B}(u,p,q; v, w, z) &=& \int_{0}^T\sum_{i=1}^K \left(u_t, v\right)_{I_i} dt+\int_{0}^T\sum_{i=1}^K\sqrt{\varepsilon}\left(q,v_x\right)_{I_i}dt +\int_{0}^T\sum_{i=1}^K\sqrt{\varepsilon}\left(u,  z_x \right)_{I_i} dt \nonumber \\
&&+\int_{0}^T\sum_{i=1}^K\left(q, w(x) \right)_{I_i}dt - \int_{0}^T\sum_{i=1}^K\left(\Delta_{(\alpha-2)/2}p,  w(x) \right)_{I_i} dt
+\int_{0}^T\sum_{i=1}^K\left(p, z(x) \right)_{I_i}dt \nonumber \\
 \nonumber &&+\int_{0}^T\sum_{i=1}^{K-1}\sqrt{\varepsilon}{q^+_{i+\frac{1}{2}}}\llbracket v\rrbracket_{i+\frac{1}{2}} dt +\int_{0}^T\sum_{i=1}^{K-1}\sqrt{\varepsilon}{u^-_{i+\frac{1}{2}}}\llbracket z\rrbracket_{i+\frac{1}{2}} dt  \\
&& +\int_{0}^T  \sqrt{\varepsilon} q^+_{\frac{1}{2}}v^+_\frac{1}{2}dt+\int_{0}^T \frac{\sqrt{\varepsilon}\beta}{h} u^-_{K+\frac{1}{2}}v^-_{K+\frac{1}{2}}dt- \int_{0}^T \sqrt{\varepsilon} q^-_{K+\frac{1}{2}}v^-_{K+\frac{1}{2}}  dt \label{eqdiff}
\end{eqnarray}
To prepare for the main result, we first obtain a few central Lemmas. We define special projections, $\mathscr{P}^{\pm}$ and $\mathscr{Q}$ into $V_h$, which satisfy, for each $j$,
\begin{eqnarray}
&&\int_{I_j}(\mathscr{P}^{\pm}u(x)-u(x))v(x)dx = 0\quad \forall v\in P^{k-1}\quad \text{and} \quad \mathscr{P}^{\pm}u_{j+\frac{1}{2}} = u(x^\pm_{j+\frac{1}{2}})\\
&&\int_{I_j}(\mathscr{Q}u(x)-u(x))v(x)dx = 0\quad \forall v\in P^{k}
\end{eqnarray}
Denote $e_u=u-u_h,e_p=p-p_h,e_q=q-q_h$, then 
$$\mathscr{P}^-e_u=\mathscr{P}^-u-u_h,\mathscr{P}^+e_q=\mathscr{P}^+q-q_h,\mathscr{Q}e_p=\mathscr{Q}p-p_h$$
For any $(v,w,z)\in H^1(\Omega,\mathcal{T})\times L^2(\Omega,\mathcal{T}) \times L^2(\Omega,\mathcal{T})$ ,
\begin{equation}
\mathscr{B}(u,p,q; v, w, z) = \mathscr{L}(v, w, z) 
\end{equation}
Hence, $\mathscr{B}(e_u,e_p,e_q; v, w, z) =0$ and we recover 
\begin{eqnarray*}\label{Bequ}
&&\mathscr{B}(\mathscr{P}^- e_u,\mathscr{Q} e_p,\mathscr{P}^+ e_q;\mathscr{P}^- e_u,-\mathscr{Q} e_p,\mathscr{P}^+ e_q)\\
 &=& \mathscr{B}(\mathscr{P}^- e_u-e_u,\mathscr{Q} e_p-e_p,\mathscr{P}^+ e_q-e_q;\mathscr{P}^- e_u,-\mathscr{Q} e_p,\mathscr{P}^+ e_q) \\
&=&\mathscr{B}(\mathscr{P}^- u-u,\mathscr{Q}p-p,\mathscr{P}^+ q-q;\mathscr{P}^- e_u,-\mathscr{Q} e_p,\mathscr{P}^+ e_q) 
\end{eqnarray*}
Substitute  $(\mathscr{P}^-u-u, \mathscr{Q}p-p, \mathscr{P}^+q-q; \mathscr{P}^-e_u, -\mathscr{Q}e_p, \mathscr{P}^+e_q) $ into \eqref{eqdiff}, to recover the following Lemma,

\begin{lemma}\label{lem43}
For the bilinear form, \eqref{eqdiff}, we have
\begin{eqnarray*}
&&\mathscr{B}(\mathscr{P}^-u-u,\mathscr{Q}p-p,\mathscr{P}^+q-q; \mathscr{P}^- e_u,-\mathscr{Q} e_p,\mathscr{P}^+ e_q)\\
&\leqslant&\int_{0}^T\sum_{i=1}^K \left((\mathscr{P}^- u)_t-u_t, \mathscr{P}^- e_u\right)_{I_i} dt+C_{T,a,b}h^{2k+2}+\frac{1}{C_{T,a,b}}\int_{0}^T\sum_{i=1}^K\|\mathscr{Q}e_p\|_{I_i}^2dt\\
&&+\int_{0}^T\frac{\sqrt{\varepsilon}\beta}{h}|(\mathscr{P}^- e_u)^-_{K+\frac{1}{2}}|^2dt
\end{eqnarray*}
where $C_{T,a,b}$is independent of $h$, but may dependent of $T$ and $\Omega$.  
\end{lemma}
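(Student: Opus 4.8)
The plan is to expand the bilinear form \eqref{eqdiff} on the argument list $(\mathscr{P}^-u-u,\mathscr{Q}p-p,\mathscr{P}^+q-q;\mathscr{P}^-e_u,-\mathscr{Q}e_p,\mathscr{P}^+e_q)$ term by term and to exploit the defining orthogonality of the three projections to annihilate as many contributions as possible, bounding the survivors by Cauchy--Schwarz and Young's inequality. First I would record the elementary facts that $(\mathscr{P}^-e_u)_x$ and $(\mathscr{P}^+e_q)_x$ lie in $P^{k-1}$ on each cell while $\mathscr{P}^-e_u,\mathscr{P}^+e_q,\mathscr{Q}e_p$ lie in $P^{k}$; combined with the interface identities $(\mathscr{P}^+q-q)^+_{j+\frac{1}{2}}=0$ and $(\mathscr{P}^-u-u)^-_{j+\frac{1}{2}}=0$ these make the two volume coupling terms $\sqrt{\varepsilon}(\mathscr{P}^+q-q,(\mathscr{P}^-e_u)_x)$ and $\sqrt{\varepsilon}(\mathscr{P}^-u-u,(\mathscr{P}^+e_q)_x)$, the mass term $(\mathscr{Q}p-p,\mathscr{P}^+e_q)$, both interior jump terms, and the left boundary and penalty contributions all vanish identically.

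The three surviving groups I would then isolate are: (i) the time term $\sum_i((\mathscr{P}^-u)_t-u_t,\mathscr{P}^-e_u)_{I_i}$, which is carried unchanged to the right-hand side; (ii) the coupling $(\mathscr{P}^+q-q,-\mathscr{Q}e_p)$ together with the nonlocal term $(\Delta_{(\alpha-2)/2}(\mathscr{Q}p-p),\mathscr{Q}e_p)$; and (iii) the single boundary term $-\sqrt{\varepsilon}(\mathscr{P}^+q-q)^-_{K+\frac{1}{2}}(\mathscr{P}^-e_u)^-_{K+\frac{1}{2}}$, which survives because $\mathscr{P}^+$ pins the value of $q$ at the \emph{left} endpoint of $I_K$, not the right. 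For the first member of group (ii) I would apply Cauchy--Schwarz followed by Young's inequality, using the standard $\mathcal{O}(h^{k+1})$ estimate for $\|\mathscr{P}^+q-q\|$, producing a contribution of the form $C_{T,a,b}h^{2k+2}+\frac{1}{C_{T,a,b}}\int_0^T\sum_i\|\mathscr{Q}e_p\|_{I_i}^2\,dt$. For group (iii) I would use a weighted Young inequality matched to the penalty weight $\frac{\sqrt{\varepsilon}\beta}{h}$, which yields exactly the term $\int_0^T\frac{\sqrt{\varepsilon}\beta}{h}|(\mathscr{P}^-e_u)^-_{K+\frac{1}{2}}|^2\,dt$ plus a remainder $\frac{\sqrt{\varepsilon}h}{\beta}|(\mathscr{P}^+q-q)^-_{K+\frac{1}{2}}|^2$ that is $\mathcal{O}(h^{2k+3})$ by the pointwise approximation estimate and is therefore absorbed into $C_{T,a,b}h^{2k+2}$.

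The key step, and the main obstacle, is the nonlocal coupling $(\Delta_{(\alpha-2)/2}(\mathscr{Q}p-p),\mathscr{Q}e_p)$. Because $\Delta_{(\alpha-2)/2}$ is a fractional integral it is not local, so the cell-by-cell Cauchy--Schwarz used everywhere else is invalid here; instead I would first sum over $i$ to assemble the global inner product $(\Delta_{(\alpha-2)/2}(\mathscr{Q}p-p),\mathscr{Q}e_p)_{\Omega}$, apply Cauchy--Schwarz on $L^2(\Omega)$, and then invoke Lemma \ref{approrder} in its fractional-integral regime $-1<\alpha-2\leqslant 0$ (with $\mathscr{Q}p$ playing the role of the $L^2$ projection therein) to conclude $\|\Delta_{(\alpha-2)/2}(\mathscr{Q}p-p)\|_{L^2(\Omega)}\leqslant Ch^{k+1}$; a final Young's inequality splits this into the $C_{T,a,b}h^{2k+2}$ and $\frac{1}{C_{T,a,b}}\int_0^T\sum_i\|\mathscr{Q}e_p\|_{I_i}^2\,dt$ contributions. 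Collecting groups (i)--(iii) and merging all generic constants into $C_{T,a,b}$ then gives the stated estimate.
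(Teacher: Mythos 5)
Your proposal is correct and follows essentially the same route as the paper: expand $\mathscr{B}$ from \eqref{eqdiff}, kill the volume, interface, left-boundary and penalty terms via the orthogonality and interface properties of $\mathscr{P}^{\pm}$ and $\mathscr{Q}$, and bound the three survivors (the time term, the pair $(\Delta_{(\alpha-2)/2}(\mathscr{Q}p-p)-(\mathscr{P}^+q-q),\mathscr{Q}e_p)$ via Lemma \ref{approrder} plus Young, and the right-boundary term via a weighted Young matched to $\sqrt{\varepsilon}\beta/h$). The only cosmetic difference is that the paper estimates $\|\Delta_{(\alpha-2)/2}(\mathscr{Q}p-p)-(\mathscr{P}^+q-q)\|$ as a single quantity where you treat the two summands separately, which is equivalent.
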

\\ \begin{proof}
From \eqref{eqdiff} we have
\begin{eqnarray*}
&&\mathscr{B}(\mathscr{P}^-u-u,\mathscr{Q}p-p,\mathscr{P}^+q-q; \mathscr{P}^- e_u,-\mathscr{Q} e_p,\mathscr{P}^+ e_q)\\
&=&  \int_{0}^T\sum_{i=1}^K \left((\mathscr{P}^- u)_t-u_t, \mathscr{P}^- e_u\right)_{I_i} dt 
       +\int_{0}^T\sum_{i=1}^K \sqrt{\varepsilon}\left(\mathscr{P}^+q-q, (\mathscr{P}^- e_u)_x\right)_{I_i} dt \\
&& + \int_{0}^T\sum_{i=1}^K\sqrt{\varepsilon}\left(\mathscr{P}^-u-u,(\mathscr{P}^+e_q)_x\right)_{I_i}dt                   
      - \int_{0}^T\sum_{i=1}^K \left(\mathscr{P}^+q-q,\mathscr{Q}e_p\right)_{I_i}dt  \\
&& + \int_{0}^T\sum_{i=1}^K\left(\Delta_{(\alpha-2)/2}(\mathscr{Q} p-p),  \mathscr{Q} e_p \right)_{I_i} dt
      + \int_{0}^T\sum_{i=1}^K \left(\mathscr{Q}p-p,\mathscr{P}^+e_q\right)_{I_i}dt \\
&&+ \int_{0}^T\sum_{i=1}^{K-1}\sqrt{\varepsilon} (\mathscr{P}^+q-q)^+_{i+\frac{1}{2}}\llbracket \mathscr{P}^-e_u \rrbracket_{i+\frac{1}{2}}   
      +\int_{0}^T\sum_{i=1}^{K-1}\sqrt{\varepsilon} (\mathscr{P}^-u-u)^-_{i+\frac{1}{2}}\llbracket \mathscr{P}^+e_q\rrbracket_{i+\frac{1}{2}}   \\
&& +\int_{0}^T \sqrt{\varepsilon} (\mathscr{P}^+q-q)^+_{\frac{1}{2}}\llbracket\mathscr{P}^- e_u^+\rrbracket_{\frac{1}{2}}  dt 
      +\int_{0}^T \frac{\sqrt{\varepsilon}\beta}{h} (\mathscr{P}^-u-u)^-_{K+\frac{1}{2}}\llbracket\mathscr{P}^- e_u^-\rrbracket_{K+\frac{1}{2}}  dt \\
&&-\int_{0}^T \sqrt{\varepsilon} (\mathscr{P}^+q-q)^-_{K+\frac{1}{2}}\llbracket\mathscr{P}^- e_u^-\rrbracket_{K+\frac{1}{2}}  dt 
\end{eqnarray*}
Since  $(\mathscr{P}^-e_u)_x\in P^{k-1},(\mathscr{P}^+e_q)_x\in P^{k-1},(\mathscr{Q}e_p)_x\in P^{k-1},\mathscr{Q}e_p\in P^{k}$, by the properties  of the projection $\mathscr{P}^\pm,\mathscr{Q}$, we recover,
\begin{eqnarray*}
&&\left(\mathscr{P}^+q-q, (\mathscr{P}^- e_u)_x\right)_{I_i} =0,\quad \left(\mathscr{P}^-u-u,(\mathscr{P}^+e_q)_x\right)_{I_i}=0\\
&&\left(\mathscr{Q}p-p,\mathscr{P}^+e_q\right)_{I_i}=0, \quad  \left(\mathscr{Q}p-p,(\mathscr{P}^+e_q)_x\right)_{I_i}=0\\
&&(\mathscr{P}^-u-u)_{i+\frac{1}{2}}=0,\quad (\mathscr{P}^+q-q)_{i+\hf} =0 .
\end{eqnarray*}
We obtain,
\begin{eqnarray*}
&&\mathscr{B}(\mathscr{P}^-u-u,\mathscr{Q}p-p,\mathscr{P}^+q-q; \mathscr{P}^- e_u,-\mathscr{Q} e_p,\mathscr{P}^+ e_q)\\
&=& \int_{0}^T\sum_{i=1}^K \left((\mathscr{P}^- u)_t-u_t, \mathscr{P}^- e_u\right)_{I_i} dt   -\int_{0}^T \sqrt{\varepsilon} (\mathscr{P}^+q-q^-)_{K+\frac{1}{2}}(\mathscr{P}^- e_u)^-_{K+\frac{1}{2}}  dt \\
&& +\int_{0}^T\sum_{i=1}^K\left(\Delta_{(\alpha-2)/2}(\mathscr{Q} p-p)-(\mathscr{P}^+q-q),  \mathscr{Q} e_p \right)_{I_i} dt    
\end{eqnarray*}
Recalling the projection property and Lemma \ref{approrder}, we obtain
$\|\Delta_{(\alpha-2)/2}(\mathscr{Q} p-p)-(\mathscr{P}^+q-q)\|\leqslant Ch^{k+1}$. Combining this with  Young's inequality and \eqref{inverse}, we obtain
\begin{eqnarray*}
&&\mathscr{B}(\mathscr{P}^-u-u,\mathscr{Q}p-p,\mathscr{P}^+q-q; \mathscr{P}^- e_u,-\mathscr{Q} e_p,\mathscr{P}^+ e_q)\\
&\leqslant& \int_{0}^T\sum_{i=1}^K \left((\mathscr{P}^- u)_t-u_t, \mathscr{P}^- e_u\right)_{I_i} dt +C_{T,a,b}h^{2k+2}+\frac{1}{C_{T,a,b}}\int_{0}^T\sum_{i=1}^K\| \mathscr{Q} e_p \|_{I_i}^2 dt   \\
&&+\int_{0}^T\frac{\sqrt{\varepsilon}\beta }{h}| (\mathscr{P}^- e_u)_{K+\frac{1}{2}}|^2dt 
\end{eqnarray*}
This proves the lemma.
\end{proof} \\

\begin{lemma}\cite{castillo}\label{evaltime}
Suppose that for all $t>0$ we have
$$\chi^2(t)+R(t)\leqslant A(t)+2\int_{0}^tB(s)\chi(s)ds,$$
where $R,A,B$ are nonnegative functions. Then, for any $T>0$,
$$\sqrt{\chi^2(T)+R(t)}\leqslant\sup_{0\leqslant t\leqslant T}A^{1/2}(t)+\int_{0}^TB(t)dt$$ \\
\end{lemma}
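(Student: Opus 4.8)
The plan is to treat this as a Gronwall-type differential inequality, taking care to keep $\chi^2$ and $R$ together under a single majorant rather than bounding $\chi$ uniformly and substituting back (the latter loses a factor and does not close). Fix $T>0$ and write $\bar A:=\sup_{0\leqslant t\leqslant T}A(t)$, noting that $\bar A^{1/2}=\sup_{0\leqslant t\leqslant T}A^{1/2}(t)$ since $A\geqslant 0$ and $x\mapsto x^{1/2}$ is monotone. Since $\chi$ is a norm in every application it is nonnegative, so I introduce the nondecreasing majorant
$$V(t):=\bar A+2\int_0^t B(s)\chi(s)\,ds,\qquad t\in[0,T].$$
From the hypothesis together with $A(t)\leqslant\bar A$ one gets, for every $t\in[0,T]$,
$$\chi^2(t)+R(t)\leqslant A(t)+2\int_0^t B(s)\chi(s)\,ds\leqslant V(t),$$
so both quantities of interest are controlled by the \emph{same} $V(t)$.

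Next I would extract a differential inequality for $V$. Discarding the nonnegative term $R$ gives $\chi^2(t)\leqslant V(t)$, hence $\chi(t)\leqslant V(t)^{1/2}$, and since $B\geqslant 0$,
$$V'(t)=2B(t)\chi(t)\leqslant 2B(t)\,V(t)^{1/2}.$$
To license division by $V^{1/2}$ when $V$ may vanish, I regularize: for $\delta>0$ set $V_\delta:=V+\delta\geqslant\delta>0$, so that $V_\delta'=V'\leqslant 2B\,V^{1/2}\leqslant 2B\,V_\delta^{1/2}$ and therefore $\frac{d}{dt}V_\delta^{1/2}=\frac{V_\delta'}{2V_\delta^{1/2}}\leqslant B$. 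Integrating on $[0,t]$ with $V_\delta(0)=\bar A+\delta$ yields
$$V_\delta(t)^{1/2}\leqslant(\bar A+\delta)^{1/2}+\int_0^t B(s)\,ds\leqslant(\bar A+\delta)^{1/2}+\int_0^T B(s)\,ds.$$

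Finally I would assemble the estimate at $t=T$. Using $\chi^2(T)+R(T)\leqslant V(T)\leqslant V_\delta(T)$ and squaring the previous display gives
$$\chi^2(T)+R(T)\leqslant\Big((\bar A+\delta)^{1/2}+\int_0^T B(s)\,ds\Big)^2,$$
and letting $\delta\to 0$ followed by taking square roots produces $\sqrt{\chi^2(T)+R(T)}\leqslant\bar A^{1/2}+\int_0^T B(s)\,ds$, which is the claim (the point of evaluation in the left-hand $R$ being $T$). There is no deep obstacle here; the only genuine subtleties are the two bookkeeping points just used, namely keeping $\chi^2(t)+R(t)$ dominated by the single majorant $V(t)$ whose growth the ODE comparison controls, and the $\delta$-regularization that makes the step $\frac{d}{dt}V_\delta^{1/2}\leqslant B$ rigorous. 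Replacing $A(t)$ by $\bar A$ at the very first step is precisely what makes $\sup_{0\leqslant t\leqslant T}A^{1/2}(t)$ appear in the final bound.
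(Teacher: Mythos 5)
Your argument is correct. Note, however, that the paper does not prove this lemma at all: it is quoted verbatim from the cited reference \cite{castillo} (where the left-hand side of the conclusion should read $\sqrt{\chi^2(T)+R(T)}$ --- the $R(t)$ in the statement is a typo that you silently and correctly fixed). So there is no in-paper proof to compare against; on its own merits, your regularized ODE-comparison argument is a clean and standard way to get the result. The key points are exactly the ones you flag: dominating $\chi^2(t)+R(t)$ by the single nondecreasing majorant $V(t)=\bar A+2\int_0^t B\chi$, and the $\delta$-shift that legitimizes $\frac{d}{dt}V_\delta^{1/2}\leqslant B$ when $V$ may vanish. Two tiny remarks: the assumption $\chi\geqslant 0$ is not actually needed, since $V'=2B\chi\leqslant 2B|\chi|\leqslant 2BV^{1/2}$ already follows from $\chi^2\leqslant V$ and $B\geqslant 0$; and the argument implicitly assumes $B\chi$ is locally integrable so that $V$ is absolutely continuous, which is harmless in the intended application. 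An equally common alternative route, avoiding the regularization, is to evaluate the hypothesis at a point $T^*$ where $\sqrt{\chi^2+R}$ attains its supremum on $[0,T]$ and solve the resulting quadratic inequality; both proofs are of comparable length.
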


\begin{theorem}\label{thm1}
Let $u$ be the a sufficiently smooth exact solution to \eqref{problem} in $\Omega \subset \mathbb{R}$ with $f(u)=0$. Let $u_h$ be the numerical solution of the semi-discrete LDG scheme \eqref{sch1}-\eqref{sch4}, then for small enough $h$, we have the following error estimates
\begin{equation*}
\|u-u_h\| \leqslant Ch^{k+1}
\end{equation*}
where $C$ is a constant independent of $h$. \\
\end{theorem}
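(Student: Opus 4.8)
The plan is to follow the standard energy/projection framework for LDG error estimates, splitting the error $e_u = u - u_h$ via the special projections as $e_u = (u-\mathscr{P}^- u) + \mathscr{P}^- e_u$, and likewise $e_q = (q-\mathscr{P}^+ q)+\mathscr{P}^+e_q$, $e_p = (p-\mathscr{Q}p)+\mathscr{Q}e_p$. The projection parts are controlled at order $h^{k+1}$ by classical approximation theory, so only the finite-element parts $\mathscr{P}^- e_u,\mathscr{Q} e_p,\mathscr{P}^+ e_q \in V_h$ need to be estimated. First I would invoke Galerkin orthogonality, $\mathscr{B}(e_u,e_p,e_q; v,w,z)=0$ for all $(v,w,z)\in V_h$, together with the chain of identities displayed just before Lemma \ref{lem43}, to reduce the task to estimating $\mathscr{B}(\mathscr{P}^- e_u,\mathscr{Q} e_p,\mathscr{P}^+ e_q;\,\mathscr{P}^- e_u,-\mathscr{Q} e_p,\mathscr{P}^+ e_q)$.

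Next I would evaluate this quantity in two ways. On one hand, choosing the test triple $(\mathscr{P}^- e_u,-\mathscr{Q} e_p,\mathscr{P}^+ e_q)$ and repeating the integration-by-parts computation of Lemma \ref{lem41} specialized to the diffusion form \eqref{eqdiff} (where all convective and $\hat f$-flux contributions vanish), the bilinear form equals the energy expression
\[
\|\mathscr{P}^- e_u(T)\|^2 - \|\mathscr{P}^- e_u(0)\|^2 + \int_0^T \big(\Delta_{(\alpha-2)/2}\mathscr{Q} e_p,\, \mathscr{Q} e_p\big)\,dt + \int_0^T \frac{\sqrt{\varepsilon}\beta}{h}\big|(\mathscr{P}^- e_u)^-_{K+\hf}\big|^2\,dt,
\]
where by Lemma \ref{fracnorm2} the fractional-diffusion term is a nonnegative fractional seminorm. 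On the other hand, Lemma \ref{lem43} bounds the very same bilinear form from above. Equating the two expressions and cancelling the common boundary term $\int_0^T \frac{\sqrt{\varepsilon}\beta}{h}|(\mathscr{P}^- e_u)^-_{K+\hf}|^2\,dt$ appearing on both sides leaves
\[
\|\mathscr{P}^- e_u(T)\|^2 + \int_0^T \big(\Delta_{(\alpha-2)/2}\mathscr{Q} e_p,\, \mathscr{Q} e_p\big)\,dt \leq \|\mathscr{P}^- e_u(0)\|^2 + \int_0^T \sum_{i=1}^K \big((\mathscr{P}^- u)_t - u_t,\, \mathscr{P}^- e_u\big)_{I_i}\,dt + C_{T,a,b}\, h^{2k+2} + \frac{1}{C_{T,a,b}}\int_0^T \|\mathscr{Q} e_p\|^2\,dt.
\]

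The step I expect to be the main obstacle is the term $\frac{1}{C_{T,a,b}}\int_0^T \|\mathscr{Q} e_p\|^2\,dt$, since the left-hand side only controls $\mathscr{Q} e_p$ in the weaker fractional seminorm and not in $L^2$. Here I would use the fractional Poincar\'e--Friedrichs inequality (Lemma \ref{fracpc}) together with Lemma \ref{fracnorm2} to obtain $\|\mathscr{Q} e_p\|^2_{L^2} \leq C_P\,\big(\Delta_{(\alpha-2)/2}\mathscr{Q} e_p,\,\mathscr{Q} e_p\big)$; choosing the free constant $C_{T,a,b}$ from Lemma \ref{lem43} larger than $C_P$ (it is independent of $h$, so the $C_{T,a,b}h^{2k+2}$ term remains $\mathcal{O}(h^{2k+2})$) then lets me absorb this term into the diffusion term on the left while keeping the net $p$-contribution nonnegative.

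Finally, I would bound the initial error $\|\mathscr{P}^- e_u(0)\| = \|\mathscr{P}^- u_0 - \mathscr{Q} u_0\| = \mathcal{O}(h^{k+1})$, using that $u_h(\cdot,0)=\mathscr{Q}u_0$ by \eqref{dfsch4} and that both projections approximate $u_0$ to order $h^{k+1}$, and estimate the time-derivative term by Cauchy--Schwarz, $\int_0^T \big((\mathscr{P}^- u)_t - u_t,\mathscr{P}^- e_u\big)\,dt \leq \int_0^T \|(\mathscr{P}^- u)_t - u_t\|\,\|\mathscr{P}^- e_u\|\,dt$, where $\|(\mathscr{P}^- u)_t - u_t\| = \mathcal{O}(h^{k+1})$ for smooth $u$. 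Applying the Gr\"onwall-type Lemma \ref{evaltime} with $\chi(t)=\|\mathscr{P}^- e_u(t)\|$, $R(t)$ the nonnegative accumulated diffusion term, $A(t)=\mathcal{O}(h^{2k+2})$ and $B(t)=\tfrac12\|(\mathscr{P}^- u)_t-u_t\|=\mathcal{O}(h^{k+1})$ yields $\|\mathscr{P}^- e_u(T)\|\leq Ch^{k+1}$. The triangle inequality $\|u-u_h\|\leq \|u-\mathscr{P}^- u\|+\|\mathscr{P}^- e_u\|$, combined with the standard bound $\|u-\mathscr{P}^- u\|=\mathcal{O}(h^{k+1})$, then completes the proof, the restriction to small enough $h$ entering only to guarantee the absorption is valid uniformly.
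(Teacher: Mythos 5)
Your proposal follows essentially the same route as the paper's proof: the same projection decomposition, the two-sided evaluation of $\mathscr{B}(\mathscr{P}^- e_u,\mathscr{Q} e_p,\mathscr{P}^+ e_q;\mathscr{P}^- e_u,-\mathscr{Q} e_p,\mathscr{P}^+ e_q)$ via Lemmas \ref{lem41} and \ref{lem43}, absorption of the $\|\mathscr{Q}e_p\|^2$ term through the fractional Poincar\'e--Friedrichs inequality, and the Gr\"onwall-type Lemma \ref{evaltime}. If anything, you are slightly more careful than the paper in treating the initial error as $\|\mathscr{P}^-u_0-\mathscr{Q}u_0\|=\mathcal{O}(h^{k+1})$ rather than exactly zero, but this does not change the argument.
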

\begin{proof}
From Lemma \ref{lem41} and the initial error $\|\mathscr{P}^-e_u(0)\| =0$, we have
\begin{eqnarray*}
&&B(\mathscr{P}^- e_u,\mathscr{Q} e_p,\mathscr{P}^+ e_q;\mathscr{P}^- e_u,-\mathscr{Q} e_p,\mathscr{P}^+ e_q) \\
&=&\frac{1}{2}\|\mathscr{P}^-e_u(T)\|^2  +\int_{0}^T\left(\Delta_{(\alpha-2)/2}\mathscr{Q} e_p,  \mathscr{Q} e_p \right)_{I_i} dt+\int_{0}^T \frac{\sqrt{\varepsilon}\beta}{h} |\mathscr{P}^-e_u|^2_{K+\frac{1}{2}} dt 
\end{eqnarray*}
Combining this with Lemma \ref{lem43} and \eqref{Bequ}, we have
\begin{eqnarray*}
&&\frac{1}{2}\|\mathscr{P}^-e_u(T)\|^2 +\int_{0}^T\left(\Delta_{(\alpha-2)/2}\mathscr{Q} e_p,  \mathscr{Q} e_p \right)dt \\
&\leqslant& \int_{0}^T \left((\mathscr{P}^- u)_t-u_t, \mathscr{P}^- e_u\right)dt +C_{T,a,b}h^{2k+2}+\frac{1}{C_{T,a,b}}\int_{0}^T \| \mathscr{Q} e_p \|^2 dt  
\end{eqnarray*}
Recalling  the fractional Poincar\'e-Friedrichs Lemma \ref{fracpc}, we get
\begin{eqnarray*}
&&\frac{1}{2}\|\mathscr{P}^-e_u(T)\|^2 \leqslant \int_{0}^T \left((\mathscr{P}^- u)_t-u_t, \mathscr{P}^- e_u\right)dt +C_{T,a,b}h^{2k+2}
\end{eqnarray*}
Using Lemma \ref{evaltime} and the error associated with the projection error, proves the theorem.
\end{proof}

For the more general fractional convection-diffusion problem, we introduce a few results and then give the error estimate. \\
 
\begin{lemma}\cite{shu1}
For any piecewise smooth function $\omega \in L^2(\Omega)$, on each cell boundary point we define
\begin{equation}
\kappa (\hat{f};\omega) \equiv \kappa(\hat{f};\omega^-,\omega^+)  \triangleq  \left\{\begin{array}{l l}
[\omega]^{-1}(f(\bar{\omega})-\hat{f}(\omega)) & \text{if} \quad [\omega]\neq 0,\\
\frac{1}{2}|f'(\bar{\omega})| & \text{if} \quad [\omega] = 0
\end{array}
\right.
\end{equation}
where $\hat{f}(\omega)\equiv \hat{f}(\omega^{-},\omega^+)$ is a monotone numerical flux consistent with the given flux $f$. Then $\kappa(\hat{f},\omega)$ is non-negative and bounded for any $(\omega^-,\omega^+) \in \mathbb{R}$. Moreover, we have 
\begin{eqnarray}
\frac{1}{2}|f'(\bar{\omega})| \leqslant  \kappa(\hat{f};\omega)+C_*|[\omega]|,\\
-\frac{1}{8}f''(\bar{\omega})[\omega] \leqslant \kappa(\hat{f};\omega)+C_*|[\omega]|^2.
\end{eqnarray}
\end{lemma}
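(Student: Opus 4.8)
The plan is to represent $\kappa$ as an average of $\partial_1\hat f-\partial_2\hat f$ along the segment joining $(\omega^-,\omega^+)$ to the diagonal, where $\partial_1\hat f,\partial_2\hat f$ denote the partial derivatives of the numerical flux in its first and second arguments. First I would use consistency, $\hat f(\bar\omega,\bar\omega)=f(\bar\omega)$, together with the fundamental theorem of calculus along the path $\gamma(s)=\bigl((1-s)\omega^-+s\bar\omega,\,(1-s)\omega^++s\bar\omega\bigr)$, to obtain, for $[\omega]\neq0$,
\[
\kappa(\hat f;\omega)=\frac{f(\bar\omega)-\hat f(\omega^-,\omega^+)}{[\omega]}=\frac12\int_0^1\bigl(\partial_1\hat f-\partial_2\hat f\bigr)(\gamma(s))\,ds .
\]
Since monotonicity of the flux means $\partial_1\hat f\geq0\geq\partial_2\hat f$, the integrand is nonnegative and hence $\kappa\geq0$; Lipschitz continuity of $\hat f$ bounds the integrand, so $\kappa$ is bounded. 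The case $[\omega]=0$ is immediate from the definition $\kappa=\frac12|f'(\bar\omega)|$.

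For the bound $\frac12|f'(\bar\omega)|\leq\kappa+C_*|[\omega]|$, I would use the pointwise inequality $\partial_1\hat f-\partial_2\hat f\geq|\partial_1\hat f+\partial_2\hat f|$, valid because $\partial_1\hat f\geq0\geq\partial_2\hat f$. Differentiating the consistency relation $\hat f(c,c)=f(c)$ gives $(\partial_1\hat f+\partial_2\hat f)(c,c)=f'(c)$, so along $\gamma(s)$, which stays within $O(|[\omega]|)$ of $(\bar\omega,\bar\omega)$, smoothness yields $|(\partial_1\hat f+\partial_2\hat f)(\gamma(s))|\geq|f'(\bar\omega)|-C|[\omega]|$. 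Inserting this into the integral representation and integrating in $s$ gives the claim.

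The delicate estimate is $-\frac18 f''(\bar\omega)[\omega]\leq\kappa+C_*[\omega]^2$. Here I would Taylor expand the integrand to second order about $(\bar\omega,\bar\omega)$ and integrate; using equality of mixed partials and the twice-differentiated consistency relation $\partial_{11}\hat f+2\partial_{12}\hat f+\partial_{22}\hat f=f''$, this reduces the statement to
\[
P+\frac{[\omega]}{2}Q+O([\omega]^2)\geq -C_*[\omega]^2,\qquad P:=\tfrac12(\partial_1\hat f-\partial_2\hat f)(\bar\omega,\bar\omega)\geq0,\quad Q:=\partial_{12}\hat f(\bar\omega,\bar\omega).
\]
I expect the cross term $\frac{[\omega]}{2}Q$ to be the crux: it is only linear in $[\omega]$, so near a sonic point, where $P\to0$, a crude bound $|Q||[\omega]|$ cannot be absorbed into $\kappa+C_*[\omega]^2$.

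To overcome this I would invoke the square-root (Glaeser) inequality for nonnegative $C^2$ functions: if $g\geq0$ and $\|D^2g\|_\infty\leq M$, then $|\nabla g|^2\leq 2Mg$ pointwise, which follows by writing the nonnegative quadratic upper bound $g(x)+t\,\partial_v g(x)+\frac{M}{2}t^2\geq g(x+tv)\geq0$ for all $t$ and forcing its discriminant to be nonpositive. Applying this to $g=\partial_1\hat f\geq0$ gives $Q^2=(\partial_2\partial_1\hat f)^2\leq 2M\,\partial_1\hat f\leq 4MP$, whence completing the square yields $P+\frac{[\omega]}{2}Q\geq-\frac{M}{4}[\omega]^2$. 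This closes the estimate for small jumps; for $|[\omega]|$ bounded below, the left-hand side is linear while $C_*[\omega]^2$ grows quadratically and $\kappa\geq0$, so enlarging $C_*$ on the compact range of the bounded solution handles that regime. The uniform constants in the Taylor remainders come from bounding the third derivatives of $f$ and $\hat f$ on that range, so a mild $C^3$ smoothness of the monotone flux is assumed throughout.
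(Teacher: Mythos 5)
Your argument is essentially correct, but note first that the paper does not prove this lemma at all: it is quoted verbatim from Zhang and Shu \cite{shu1}, so the comparison is really with that reference rather than with anything in this text. Your route --- the integral representation $\kappa=\frac12\int_0^1(\partial_1\hat f-\partial_2\hat f)(\gamma(s))\,ds$, the identity $(\partial_1\hat f+\partial_2\hat f)(c,c)=f'(c)$, and the Glaeser inequality $Q^2\leqslant 4MP$ to absorb the cross term --- is internally consistent and does close the second estimate; I checked the Taylor bookkeeping ($\kappa+\tfrac18 f''(\bar\omega)[\omega]=P+\tfrac{[\omega]}{2}\partial_{12}\hat f(\bar\omega,\bar\omega)+O([\omega]^2)$) and the completion of the square, and both are right. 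The standard proof, however, is considerably lighter and avoids every derivative of $\hat f$: monotonicity plus consistency give $\operatorname{sign}([\omega])\,\bigl(\hat f(\omega^-,\omega^+)-f(\omega^\pm)\bigr)\leqslant 0$ for both choices of sign, hence $\operatorname{sign}([\omega])\,\bigl(f(\bar\omega)-\hat f(\omega)\bigr)\geqslant f(\bar\omega)-\max\bigl(f(\omega^-),f(\omega^+)\bigr)$ when $[\omega]<0$ and $\geqslant f(\bar\omega)-\min\bigl(f(\omega^-),f(\omega^+)\bigr)$ when $[\omega]>0$; Taylor expanding $f(\omega^\pm)$ about $\bar\omega$ to first order then yields $\kappa\geqslant\tfrac12|f'(\bar\omega)|-C|[\omega]|$, and comparing with the average $\tfrac12\bigl(f(\omega^-)+f(\omega^+)\bigr)$ and expanding to second order yields $\kappa\geqslant-\tfrac18 f''(\bar\omega)[\omega]-C_*[\omega]^2$ directly, with no sonic-point difficulty at all. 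What that buys, beyond brevity, is generality: the lemma is stated for \emph{any} monotone consistent flux, and fluxes such as Godunov's or Engquist--Osher's are only Lipschitz (or $C^{1,1}$), so your $C^3$ hypothesis on $\hat f$ --- which you honestly flag, and which is genuinely needed for the Glaeser step --- excludes cases the lemma is meant to cover. What your approach buys in return is a sharper structural picture (the explicit role of $\partial_{12}\hat f$ near sonic points), but for the purposes of this paper the elementary monotonicity argument is the one to know.
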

Define 
\begin{eqnarray*}
\sum_{j=1}^K \mathscr{H}_j(f;u,u_h,v) &=& \sum_{j=1}^K\int_{I_j}(f(u)-f(u_h))v_x dx +\sum_{j=1}^K((f(u)-f(\bar{u_h}))[v])_{j+\frac{1}{2}}\\
&&+\sum_{j=1}^K((f(\bar{u_h})-\hat{f})[v])_{j+\frac{1}{2}}
\end{eqnarray*}
\begin{lemma}\cite{shu2}\label{nonlinear}
For $\mathscr{H}(f;u,u_h,v) $ defined above,  we have the following estimate
\begin{eqnarray*}
\sum_{j=1}^K \mathscr{H}_j(f;u,u_h,v) & \leqslant & -\frac{1}{4}\kappa(\hat{f};u_h)[v]^2+(C+C_*(\|v\|_\infty+h^{-1}\|e_u\|_\infty^2))\|v\|^2\\
&&+(C+C_*h^{-1}\|e_u\|_\infty^2)h^{2k+1}
\end{eqnarray*}
\end{lemma}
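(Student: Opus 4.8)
The plan is to estimate the three groups of terms in $\sum_{j=1}^K \mathscr{H}_j$ separately, using a Taylor expansion of $f$ to peel the linear‑in‑error contributions off the quadratic remainders, and then to absorb the surviving jump terms into the dissipative quantity $-\frac{1}{4}\kappa(\hat f;u_h)[v]^2$ by means of the two inequalities for $\kappa$ recorded in the preceding lemma. The first move is to rewrite the pure flux‑difference term exactly. Since the exact solution is continuous, $[u]=0$ and hence $[u_h]=-[e_u]$, while by the definition of $\kappa$ the interface factor satisfies $f(\bar{u}_h)-\hat f=\kappa(\hat f;u_h)[u_h]$. Thus the third sum becomes $\sum_j \kappa(\hat f;u_h)[u_h][v]$, an exact expression that carries the numerical viscosity and is the source of the target term $-\frac14\kappa[v]^2$ once it is combined with the first‑order parts of the other two sums.

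Next I would Taylor expand $f$ about the mean value $\bar{u}_h$ at each interface, exploiting the identity $u-\bar{u}_h=\bar{e}_u$, and about the local value $u_h$ inside each cell. This splits the cell integral and the second interface sum into a linear part proportional to $f'(\bar{u}_h)$ (respectively $f'(u_h)$) and a quadratic remainder proportional to $f''$ times $e_u^2$. For the linear part I would use summation by parts to tie the volume contribution $\int_{I_j} f'(u_h)e_u v_x\,dx$ to the interface contributions, so that the smooth portion telescopes across cells and only jump expressions of the form $f'(\bar{u}_h)(\cdots)[v]$ survive.

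The crux is then to combine these surviving first‑order jump terms with $\sum_j \kappa[u_h][v]$ and to invoke $\frac12|f'(\bar{u}_h)|\le \kappa+C_*|[u_h]|$ together with $-\frac18 f''(\bar{u}_h)[u_h]\le \kappa+C_*|[u_h]|^2$ to dominate the indefinite‑sign jump terms by $-\frac14\kappa[v]^2$ plus $C_*$‑weighted remainders. The quadratic remainders are handled by pulling out $\|e_u\|_\infty^2$ and estimating the remaining factors with Cauchy–Schwarz and Young's inequality, the inverse inequalities of Lemma \ref{inverse} (trading $\|v_x\|$ and boundary traces for $h^{-1}\|v\|$ and $h^{-1/2}$), and the approximation bound $\|e_u\|\le Ch^{k+1}$ implicit in Lemma \ref{approrder}; balancing the powers of $h$ in this way yields the factor $C_* h^{-1}\|e_u\|_\infty^2$ multiplying $\|v\|^2$ and the consolidated $h^{2k+1}$ projection error.

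The main obstacle is the sign‑sensitive absorption step. The first‑order jump terms $f'(\bar{u}_h)[v]$ and $f''(\bar{u}_h)[u_h][v]$ carry no definite sign, and the only mechanism available to dominate them is the numerical viscosity buried in $\kappa$. Driving the constant down to exactly $-\frac14$ while guaranteeing that every leftover is either an $O(\|v\|^2)$ term, a $C_*$‑weighted $[v]^2$ term, or an $O(h^{2k+1})$ term demands careful bookkeeping of the two $\kappa$‑inequalities and repeated application of Young's inequality, with the inverse‑inequality powers of $h$ balanced precisely so that no uncontrolled negative power of $h$ remains.
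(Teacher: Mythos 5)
The paper does not actually prove this lemma: it is quoted from Xu and Shu \cite{shu2} and used as a black box, so there is no in-paper argument to compare against line by line. Your outline does reproduce the strategy of the cited proof --- second-order Taylor expansion of $f$ about $u_h$ in the cells and about $\bar u_h$ at the interfaces, the exact identity $f(\bar u_h)-\hat f=\kappa(\hat f;u_h)[u_h]$ together with $[u]=0$ so that $[u_h]=-[e_u]$, integration by parts on the linearized volume term so that only interface jumps survive, absorption of the sign-indefinite jump terms through the two $\kappa$-inequalities of the preceding lemma, and Cauchy--Schwarz, Young and the inverse estimates of Lemma \ref{inverse} for the quadratic remainders. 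That is the correct skeleton.

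Two points must be repaired before this is a proof. First, you invoke ``the approximation bound $\|e_u\|\leqslant Ch^{k+1}$ implicit in Lemma \ref{approrder}.'' In Lemma \ref{approrder} the quantity bounded is the $L^2$-projection error of $u$; here $e_u=u-u_h$ with $u_h$ the LDG solution, and $\|e_u\|\leqslant Ch^{k+1}$ is the conclusion of the main theorem --- assuming it at this stage is circular. The argument must instead split $e_u=v+\eta$ with $v=\mathscr{P}u-u_h\in V_h$ and $\eta=u-\mathscr{P}u$ the projection error, for which $\|\eta\|\leqslant Ch^{k+1}$ and $\|\eta\|_{\Gamma_h}\leqslant Ch^{k+\frac{1}{2}}$ are known a priori; every $h^{2k+1}$ term on the right-hand side comes from $\eta$, every $\|v\|^2$ term from $v$, and $\|e_u\|_\infty$ is deliberately left unestimated inside the constants (it is removed only later by an a priori continuity argument). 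Second, the stated bound is not valid for an arbitrary test function $v$: the leading term $-\frac{1}{4}\kappa(\hat f;u_h)[v]^2$ emerges from the third sum precisely because $\kappa[u_h][v]=-\kappa[e_u][v]=-\kappa[v]^2-\kappa[\eta][v]$ when $v$ is the projected error, after which the cross term is pushed into $\frac{1}{4}\kappa[v]^2+C\kappa[\eta]^2$ by Young. You use this relationship implicitly but never state it, and the absorption step you yourself identify as the crux --- driving the coefficient down to exactly $-\frac{1}{4}$ while every leftover lands in one of the three allowed buckets --- is described as an obstacle rather than carried out. As it stands the proposal is a faithful roadmap of the Xu--Shu argument, not yet a proof of the estimate.
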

Combining Theorem \ref{thm1} and Lemma \ref{nonlinear}, we recover the following error estimate \\

\begin{theorem}
Let $u$ be the exact solution of \eqref{problem}, which is sufficiently smooth  $\in \Omega \subset \mathbb{R}$ and assume $f \in C^3$. Let $u_h$ be the numerical solution of the semi-discrete LDG scheme \eqref{dfsch1}-\eqref{dfsch4} and denote the corresponding numerical error by $e_u = u-u_h$. $V_h$ is  the space of piecewise polynomials of degree $k\geqslant 1$, then for small enough $h$, we have the following error estimates
\begin{equation*}
\|u-u_h\| \leqslant Ch^{k+\frac{1}{2}}
\end{equation*}

\end{theorem}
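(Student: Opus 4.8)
The plan is to follow the same skeleton as the proof of Theorem \ref{thm1}, but now working with the full bilinear form \eqref{B2} so that the convection contribution is retained, and to absorb the nonlinear flux terms using Lemma \ref{nonlinear}. As before, Galerkin orthogonality gives $\mathscr{B}(e_u,e_p,e_q;v,w,z)=0$ for all $(v,w,z)\in V_h$, which after the decomposition in \eqref{Bequ} reduces the task to estimating $\mathscr{B}(\mathscr{P}^-u-u,\mathscr{Q}p-p,\mathscr{P}^+q-q;\mathscr{P}^-e_u,-\mathscr{Q}e_p,\mathscr{P}^+e_q)$ against the stability identity of Lemma \ref{lem41} evaluated at the projected errors. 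First I would apply Lemma \ref{lem41} with $(v,w,z)=(\mathscr{P}^-e_u,-\mathscr{Q}e_p,\mathscr{P}^+e_q)$; this produces $\hf\|\mathscr{P}^-e_u(T)\|^2$, the nonnegative diffusion term $\int_0^T(\Delta_{(\alpha-2)/2}\mathscr{Q}e_p,\mathscr{Q}e_p)\,dt$, the boundary penalty, and the nonlinear interface terms $\llbracket\Phi(u_h)\rrbracket-\hat{f}\llbracket u_h\rrbracket$.

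Second, the linear (diffusion) part of the right-hand side is handled verbatim as in Lemma \ref{lem43}: the projection properties of $\mathscr{P}^\pm,\mathscr{Q}$ annihilate most terms, Lemma \ref{approrder} gives $\|\Delta_{(\alpha-2)/2}(\mathscr{Q}p-p)-(\mathscr{P}^+q-q)\|\leqslant Ch^{k+1}$, and Young's inequality together with the inverse inequality (Lemma \ref{inverse}) leaves a $C_{T,a,b}h^{2k+2}$ term and a $\frac{1}{C_{T,a,b}}\int_0^T\|\mathscr{Q}e_p\|^2\,dt$ term that is absorbed by the diffusion term via the fractional Poincar\'e--Friedrichs inequality (Lemma \ref{fracpc}). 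The genuinely new work is the convection term: I would collect all contributions involving $f$ into $\sum_j\mathscr{H}_j(f;u,u_h,\mathscr{P}^-e_u)$ and invoke Lemma \ref{nonlinear}. The negative term $-\frac14\kappa(\hat{f};u_h)\llbracket\mathscr{P}^-e_u\rrbracket^2$ is discarded since it has the favorable sign, while the decisive source term is $(C+C_*h^{-1}\|e_u\|_\infty^2)h^{2k+1}$: it is this $h^{2k+1}$, rather than $h^{2k+2}$, that forces the loss of half an order, since $\sqrt{h^{2k+1}}=h^{k+\hf}$.

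Third, assembling these bounds yields an inequality of the form $\chi^2(T)+R(T)\leqslant A+2\int_0^TB(t)\chi(t)\,dt$ with $\chi=\|\mathscr{P}^-e_u\|$, $A=\mathcal{O}(h^{2k+1})$, and $B$ collecting the coefficients $C+C_*(\|\mathscr{P}^-e_u\|_\infty+h^{-1}\|e_u\|_\infty^2)$; applying the Gronwall-type Lemma \ref{evaltime} gives $\|\mathscr{P}^-e_u(T)\|\leqslant Ch^{k+\hf}$, and the triangle inequality with the projection bound $\|u-\mathscr{P}^-u\|\leqslant Ch^{k+1}$ then finishes the proof.

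The hard part will be controlling the factor $h^{-1}\|e_u\|_\infty^2$ in both $A$ and $B$, which is nonlinear in the error and therefore requires an a priori (bootstrap) argument. I would assume provisionally that $\|e_u\|_\infty$ is small---say $\|e_u\|_\infty\leqslant h$---carry the estimate through to obtain $\|\mathscr{P}^-e_u\|\leqslant Ch^{k+\hf}$, and then close the loop: the inverse inequality (Lemma \ref{inverse}) gives $\|\mathscr{P}^-e_u\|_\infty\leqslant Ch^{-\hf}\|\mathscr{P}^-e_u\|\leqslant Ch^{k}$, so that for $k\geqslant 1$ and $h$ sufficiently small the a priori hypothesis is verified, the quantity $h^{-1}\|e_u\|_\infty^2\leqslant Ch^{2k-1}$ stays bounded, and $B\in L^1(0,T)$ as required by Lemma \ref{evaltime}. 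Verifying that this bootstrap is self-consistent, and that the constant does not blow up as $h\to 0$, is the main technical obstacle.
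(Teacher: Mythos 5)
Your proposal follows exactly the route the paper intends — the paper's entire ``proof'' is the single sentence ``Combining Theorem \ref{thm1} and Lemma \ref{nonlinear}, we recover the following error estimate,'' and your outline (diffusion part handled verbatim as in Lemma \ref{lem43}, convection collected into $\sum_j\mathscr{H}_j$ and bounded by Lemma \ref{nonlinear} with the $h^{2k+1}$ source term forcing the half-order loss, then Lemma \ref{evaltime}) is precisely that combination, worked out in more detail than the paper provides. The one point to tighten is the bootstrap closure at $k=1$: with the a priori hypothesis $\|e_u\|_\infty\leqslant h$ you only recover $\|e_u\|_\infty\leqslant Ch^{k}$, which does not strictly improve the hypothesis when $k=1$; the standard fix is to pose the a priori bound in $L^2$ (e.g.\ $\|e_u\|\leqslant h$, which keeps $h^{-1}\|e_u\|_\infty^2$ bounded via the inverse inequality) and close the loop using $k+\hf>1$.
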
 

\begin{remark} Although the order of convergence $k+\frac{1}{2}$ is obtained it can be improved if an upwind flux is chosen for $\hat{f}$. In this case,  optimal order can be recovered. We shall demonstrate this in the numerical examples. \\
\end{remark}

\begin{remark} For problems with mixed boundary conditions, another scheme may be more suitable for implementation. Suppose the problem has a Dirichlet boundary on the left end and a Neumann boundary at the right end. Let $p = \sqrt{\varepsilon}\frac{\partial u}{\partial x}, q=\sqrt{\varepsilon}\frac{\partial p}{\partial x}$, then we have
\begin{eqnarray*}
\left\{\begin{array}{l}
\frac{\partial u}{\partial t} + \frac{\partial}{\partial x}f(u) = \Delta_{(\alpha-2)/2}q\\
q =\sqrt{\varepsilon}\frac{\partial p}{\partial x} \\
p =\sqrt{\varepsilon}\frac{\partial u}{\partial x}\\
\end{array}\right.
\end{eqnarray*}
Similarly, we have the following scheme,
\begin{eqnarray}
&&\label{second1}\left((u_h)_t, v\right)_{I_i} +  \hat{f_h} v|_{I_i} -\left(f(u_h),v_x\right)_{I_i} -  \left(\Delta_{(\alpha-2)/2}q_h,  v(x) \right)_{I_i} = 0\\
&&\label{second2}\left(q_h, w(x) \right)_{I_i} - \sqrt{\varepsilon}\hat{p_h}w|_{I_i} +\sqrt{\varepsilon}\left(p_h,  w_x \right)_{I_i} =  0 \\
&&\label{second3}\left(p_h, z(x) \right)_{I_i}  - \sqrt{\varepsilon}\hat{u_h}z|_{I_i} +\sqrt{\varepsilon}\left(u_h,  z_x \right)_{I_i}=0\\
&&\label{second4}\left(u_h(x,0),v(x)\right)_{I_i}-\left(u_0(x),v(x)\right)_{I_i}=0
\end{eqnarray}
In this scheme, a mixed boundary condition is imposed naturally. However, the analysis is more complicated. While computational results indicate excellent behavior and optimal convergence, we shall not talk about the theoretical aspect of this scheme. 
\end{remark}

\section{Numerical examples}
In the following, we shall present a few results to numerically validate the analysis. 

The discussion so far have focused on the treatment of the spatial dimension with a semi-discrete form as
\begin{equation}
\frac{d\mathbf{u}_h}{dt} = \mathcal{L}_h(\mathbf{u}_h,t)
\end{equation}
where $\mathbf{u}_h$ is the vector of unknowns.
For the time discretization of the equations, we use a fourth order low storage explicit Runge-Kutta(LSERK) method\cite{Jan} of the form,
\begin{eqnarray*}
&& \mathbf{p}^{0} = \mathbf{u}^n,\\
&& i\in[1,\cdots,5]:\left\{\begin{array}{l}
\mathbf{k}^{i}=a_i\mathbf{k}^{i-1}+\Delta t\mathcal{L}_h(\mathbf{p}^{i-1},t^n+c_i\Delta t)\\
\mathbf{p}^{(i)}=\mathbf{p}^{(i-1)}+b_i\mathbf{k}^{(i)},\end{array}\right.\\
&&\mathbf{u}_h^{n+1}=\mathbf{p}^{(5)}
\end{eqnarray*}
where $ a_i,b_i,c_i$ are coefficients of LSRK method given in \cite{Jan}. For the explicit methods, proper timestep $\Delta t$ is needed. In our examples, numerically the condition $\Delta t\leqslant C \Delta x_{min}^\alpha,( 0<C<1)$ is necessary for stability. \\
\begin{table}[!ht]
\caption{Error and order of convergence for solving the fractional Laplacian problem with $K$ elements and polynomial order $N$.}
{\small 
\begin{center}
\begin{tabular}{r|| ccccccc|} \hline

&   \multicolumn{7}{c|}{$\alpha = 1.1$} \\ \hline
  K                               &             10        &       \multicolumn{2}{c}{20}
    &    \multicolumn{2}{c}{30}    &     \multicolumn{2}{c|}{40}  \\    
  
  N                              & $\|e_u \|_2$ & $\|e_u \|_2$ & Order &
$\|e_u \|_2$ & Order & $\|e_u \|_2$ & Order  \\ \hline
       & K=      10 & K=      20 &      order & K=      30 &      order &
K=      40 &      order \\ \hline
1 &   1.64e-06 &   3.98e-07 &       2.04 &   1.73e-07 &       2.05 &   9.60e-08
&       2.05 \\
2 &   1.24e-07 &   1.69e-08 &       2.87 &   5.13e-09 &       2.94 &   2.18e-09
&       2.97 \\
3 &   1.14e-08 &   7.23e-10 &       3.98 &   1.41e-10 &       4.03 &   4.44e-11
&       4.03 \\
 \hline
 \hline

&   \multicolumn{7}{c|}{$\alpha = 1.3$} \\ \hline
  K                               &             10        &       \multicolumn{2}{c}{20}
    &    \multicolumn{2}{c}{30}    &     \multicolumn{2}{c|}{40}  \\    
  
  N                              & $\|e_u \|_2$ & $\|e_u \|_2$ & Order &
$\|e_u \|_2$ & Order & $\|e_u \|_2$ & Order  \\ \hline
       & K=      10 & K=      20 &      order & K=      30 &      order &
K=      40 &      order \\ \hline
1 &   1.45e-06 &   3.46e-07 &       2.07 &   1.50e-07 &       2.06 &   8.33e-08
&       2.05 \\
2 &   1.19e-07 &   1.55e-08 &       2.94 &   4.64e-09 &       2.98 &   1.96e-09
&       2.99 \\
3 &   1.04e-08 &   6.48e-10 &       4.00 &   1.26e-10 &       4.03 &   3.97e-11
&       4.02 \\
 \hline
 \hline

&   \multicolumn{7}{c|}{$\alpha = 1.5$} \\ \hline
  K                               &             10        &       \multicolumn{2}{c}{20}
    &    \multicolumn{2}{c}{30}    &     \multicolumn{2}{c|}{40}  \\    
  
  N                              & $\|e_u \|_2$ & $\|e_u \|_2$ & Order &
$\|e_u \|_2$ & Order & $\|e_u \|_2$ & Order  \\ \hline
1 &   1.35e-06 &   3.24e-07 &       2.06 &   1.42e-07 &       2.04 &   7.90e-08
&       2.03 \\
2 &   1.22e-07 &   1.51e-08 &       3.01 &   4.51e-09 &       2.99 &   1.90e-09
&       2.99 \\
3 &   1.04e-08 &   6.28e-10 &       4.05 &   1.22e-10 &       4.04 &   3.85e-11
&       4.02 \\
 \hline
 \hline
 &   \multicolumn{7}{c|}{$\alpha = 1.8$} \\ \hline
  K                               &             10        &       \multicolumn{2}{c}{20}
    &    \multicolumn{2}{c}{30}    &     \multicolumn{2}{c|}{40}  \\    
  
  N                              & $\|e_u \|_2$ & $\|e_u \|_2$ & Order &
$\|e_u \|_2$ & Order & $\|e_u \|_2$ & Order  \\ \hline
1 &   1.28e-06 &   3.11e-07 &       2.04 &   1.38e-07 &       2.01 &   7.74e-08
&       2.01 \\
2 &   1.40e-07 &   1.51e-08 &       3.21 &   4.48e-09 &       3.00 &   1.89e-09
&       3.00 \\
3 &   1.44e-08 &   6.72e-10 &       4.42 &   1.23e-10 &       4.19 &   3.83e-11
&       4.05 \\
 \hline
 \hline

\end{tabular}
\end{center}}
\label{example1}
\end{table}%

{\bf{Example 1.}}
 As the first example, we consider the fractional diffusion equation with the fractional Laplacian, 
\begin{eqnarray}
\left\{\begin{array}{ll}
\frac{\partial u(x,t)}{\partial t}  =  -(-\Delta)^{\alpha/2} u(x,t)+g(x,t),\quad in\quad [0, 1]\times (0, 0.5]\\
u(x,0)=u_0(x).
\end{array}\right.
\end{eqnarray}
with the initial condition $u_0(x) = x^6(1-x)^6$, and the source term $g(x,t) = e^{-t}\left( -u_0(x)+ (-\Delta)^{\frac{\alpha}{2}}u_0(x)\right) $\\
The exact solution is $u(x,t) = e^{-t} x^6(1-x)^6$.

The results are shown in Table \ref{example1} and we observe optimal ${\cal O}(h^{k+1})$ order of convergence across $1<\alpha < 2$. \\

\begin{table}[!ht]
\caption{Error and order of convergence for solving the fractional Burgers'
equation with $K$ elements and polynomial order $N$.}
{\small 
\begin{center}
\begin{tabular}{r|| ccccccc|} \hline

&   \multicolumn{7}{c|}{$\alpha = 1.01$} \\ \hline
  K                               &             10        &       \multicolumn{2}{c}{20}
    &    \multicolumn{2}{c}{30}    &     \multicolumn{2}{c|}{40}  \\    
  
  N                              & $\|e_u \|_2$ & $\|e_u \|_2$ & Order &
$\|e_u \|_2$ & Order & $\|e_u \|_2$ & Order  \\ \hline
       & K=      10 & K=      20 &      order & K=      30 &      order &
K=      40 &      order \\ \hline
1 &   1.10e-03 &   2.81e-04 &       1.97 &   1.24e-04 &       2.03 &   6.90e-05
&       2.03 \\
2 &   6.53e-05 &   1.00e-05 &       2.70 &   3.09e-06 &       2.90 &   1.33e-06
&       2.94 \\
3 &   5.94e-06 &   4.00e-07 &       3.89 &   8.05e-08 &       3.96 &   2.58e-08
&       3.95 \\
 \hline
 \hline

&   \multicolumn{7}{c|}{$\alpha = 1.5$} \\ \hline
  K                               &             10        &       \multicolumn{2}{c}{20}
    &    \multicolumn{2}{c}{30}    &     \multicolumn{2}{c|}{40}  \\    
  
  N                              & $\|e_u \|_2$ & $\|e_u \|_2$ & Order &
$\|e_u \|_2$ & Order & $\|e_u \|_2$ & Order  \\ \hline
1 &   8.89e-04 &   2.15e-04 &       2.05 &   9.45e-05 &       2.03 &   5.28e-05
&       2.02 \\
2 &   6.71e-05 &   8.62e-06 &       2.96 &   2.57e-06 &       2.99 &   1.09e-06
&       2.99 \\
3 &   4.91e-06 &   3.34e-07 &       3.88 &   6.80e-08 &       3.93 &   2.16e-08
&       3.99 \\
 \hline
 \hline
 &   \multicolumn{7}{c|}{$\alpha = 1.8$} \\ \hline
  K                               &             10        &       \multicolumn{2}{c}{20}
    &    \multicolumn{2}{c}{30}    &     \multicolumn{2}{c|}{40}  \\    
  
  N                              & $\|e_u \|_2$ & $\|e_u \|_2$ & Order &
$\|e_u \|_2$ & Order & $\|e_u \|_2$ & Order  \\ \hline
1 &   8.43e-04 &   2.09e-04 &       2.01 &   9.25e-05 &       2.01 &   5.20e-05
&       2.00 \\
2 &   6.78e-05 &   8.59e-06 &       2.98 &   2.56e-06 &       2.99 &   1.08e-06
&       2.99 \\
3 &   4.80e-06 &   3.32e-07 &       3.85 &   6.84e-08 &       3.90 &   2.20e-08
&       3.94 \\
 \hline
 \hline

\end{tabular}
\end{center}}
\label{example2}
\end{table}%
{\bf{Example 2.}} We consider the fractional Burgers' equation
\begin{eqnarray}\label{ex2}
\left\{\begin{array}{ll}
\frac{\partial u(x,t)}{\partial t} + \frac{\partial} {\partial x}\left(\frac{u^2(x,t)}{2}\right)  =  \varepsilon(-(-\Delta)^{\alpha/2}) u(x,t)+g(x,t),\quad in\quad [-2, 2]\times (0, 0.5]\\
u(x,0)=u_0(x).
\end{array}\right.
\end{eqnarray}
with initial the condition 
$$u_0(x) = \left\{\begin{array}{ll}
(1-x^2)^4/10 &\ -1\leqslant x \leqslant 1\\
0 &\ \mbox{otherwise} \end{array}\right.$$
The source term $g(x,t) = e^{-t}\left( -u_0(x)+e^{-t}u_0(x)u'_0(x)+\varepsilon (-\Delta)^{\frac{\alpha}{2}}u_0(x)\right), \varepsilon =1 $ is derived to yield an exact solution as 
$$u(x,t) = \left\{
\begin{array}{ll}
e^{-t}(1-x^2)^4/10 &\ -1\leqslant x \leqslant 1\\
0 &\ \mbox{otherwise} \end{array}\right.$$
The results are shown in Table \ref{example2} and we observe optimal ${\cal
O}(h^{k+1})$ order of convergence across $1<\alpha < 2$. \\

{\bf Example 3} Let us finally consider the fractional Burgers' equation with a discontinuous initial condition,
$$u_0(x) = \left\{\begin{array}{ll}\frac{1}{2} & -1\leqslant x \leqslant 0\\0 & \mbox{otherwise} \end{array}\right.$$
We consider the case in  \eqref{ex2} with $\varepsilon =0.04$, $g(x,t) = 0$.
\noindent\begin{figure}[htbp] 
\begin{minipage}[b]{0.45\linewidth}
\centering
\includegraphics[width=1\linewidth]{./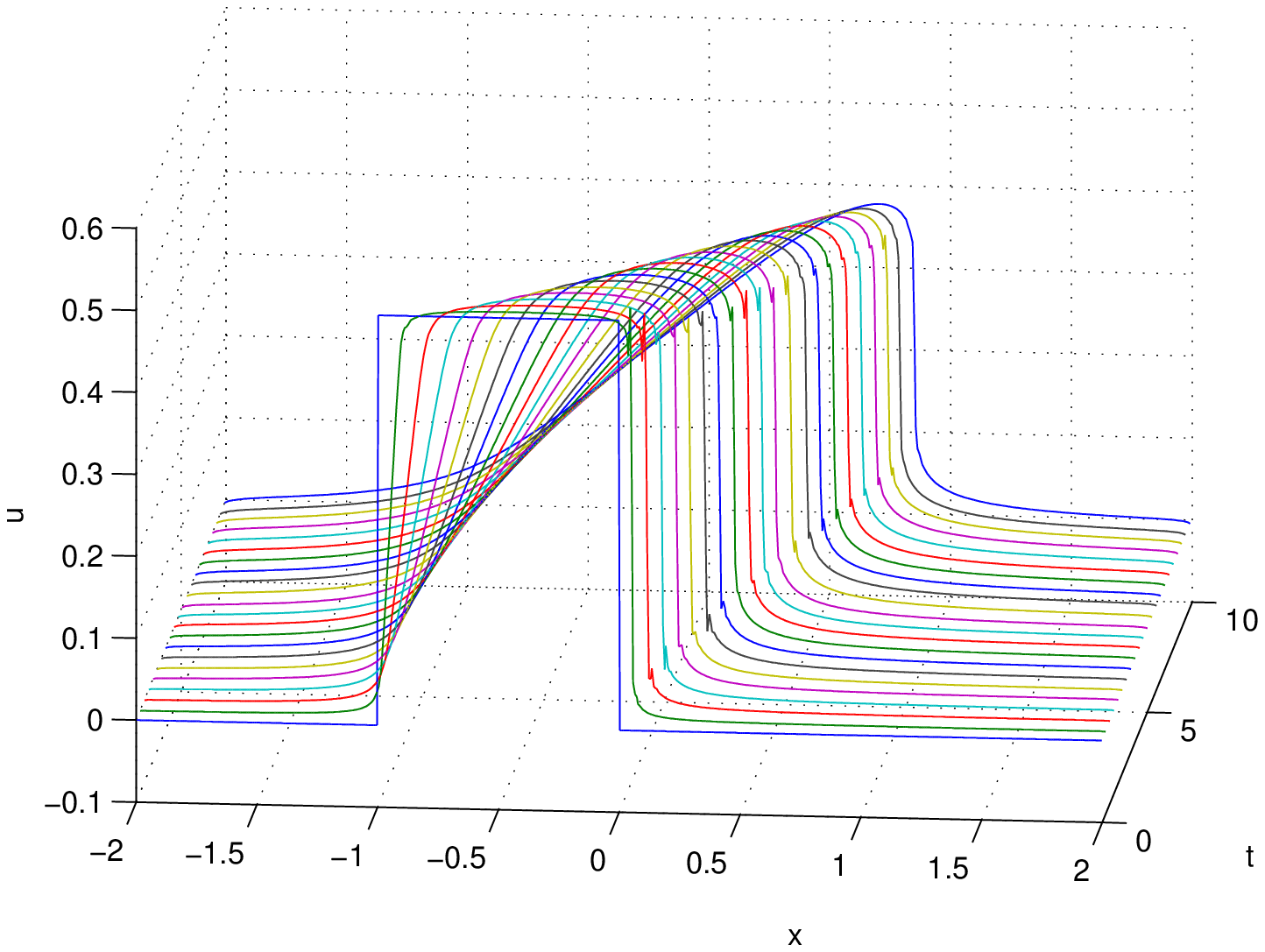}
\caption{Solution of the fractional Burgers's equation with $\epsilon=0.04,\alpha=1.005$}
\label{fig31}
\end{minipage}
\begin{minipage}[b]{0.45\linewidth}
\centering
\includegraphics[width=1\linewidth]{./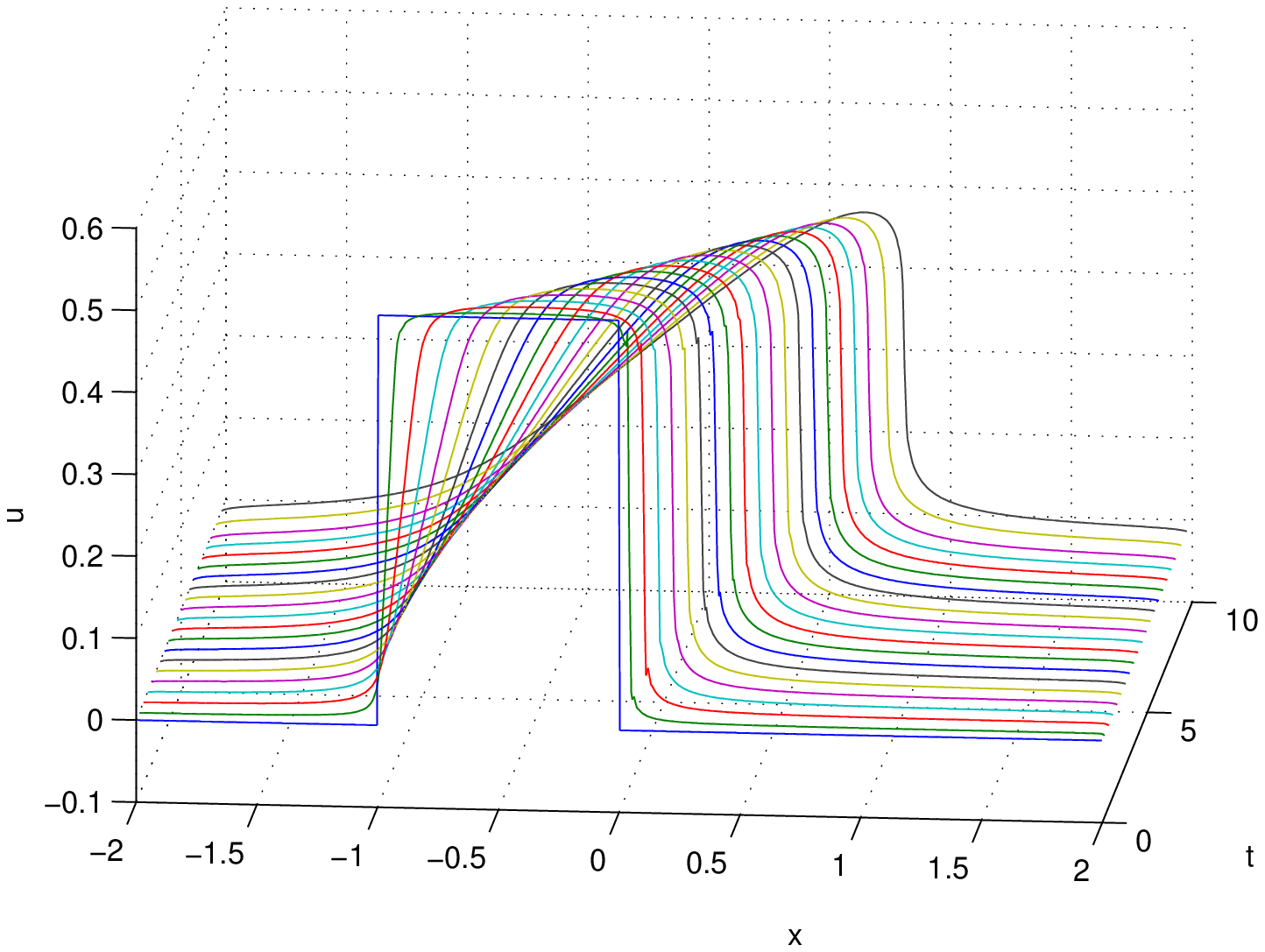} 
\caption{Solution of the fractional Burgers's equation with $\epsilon=0.04,\alpha=1.1$}
\label{fig32}
\end{minipage}
\end{figure}
\noindent\begin{figure}[htbp] 
\begin{minipage}[b]{0.45\linewidth}
\centering
\includegraphics[width=1\linewidth]{./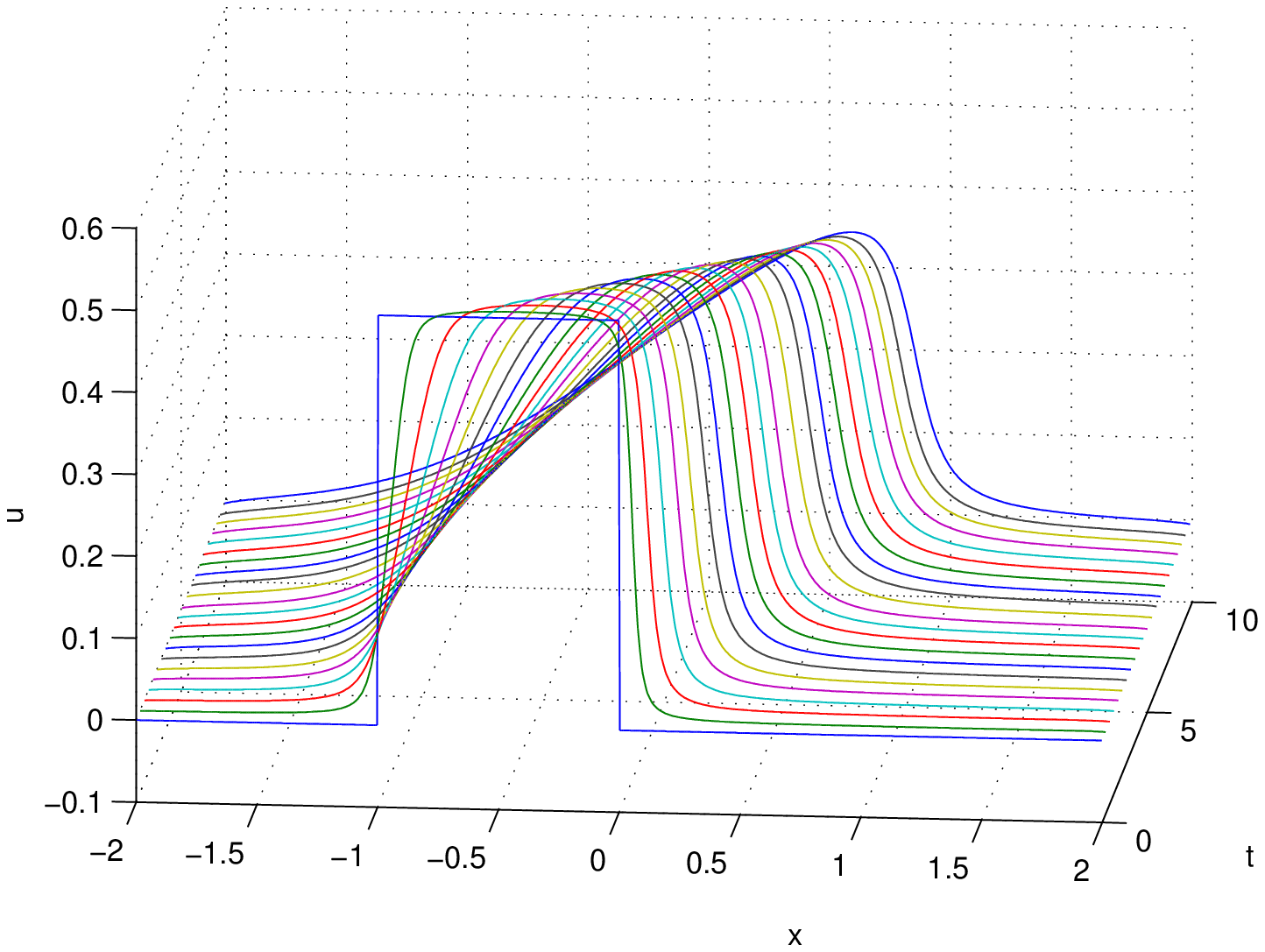}
\caption{Solution of the fractional Burgers's equation with $\epsilon=0.04,\alpha=1.5$}
\label{fig33}
\end{minipage}
\begin{minipage}[b]{0.45\linewidth}
\centering
\includegraphics[width=1\linewidth]{./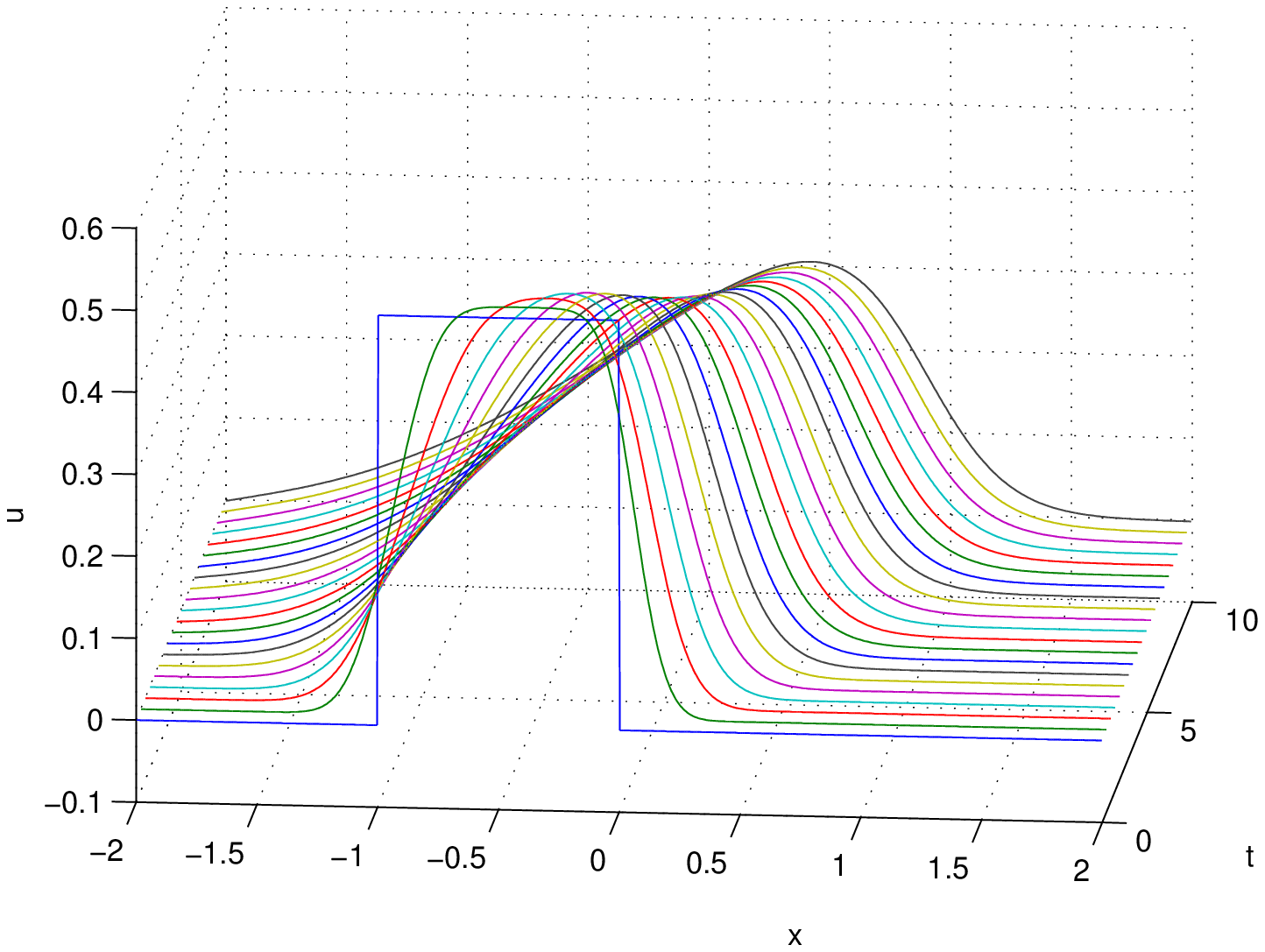} 
\caption{Solution of the fractional Burgers's equation with $\epsilon=0.04,\alpha=2.0$}
\label{fig34}
\end{minipage}
\end{figure}
The increasing dissipative effect of increasing $\alpha$ is clear from the sequence of figures, Fig. (\ref{fig31})-(\ref{fig34}), with the latter case of $\alpha=2$ being the classic case.
 
\section{Concluding remarks}
We propose a discontinuous Galerkin method for convection-diffusion problems in which the a fractional diffusion is expressed through a fractional Laplacian. To obtain a high order accuracy, we rewrite the fractional Lapacian as a composite of first order derivatives and integrals and transform the problem to a low order system. We consider the equation in a domain $\Omega$ with homogeneous boundary conditions. A local discontinuous Galerkin method is proposed and stability and error estimations are derived, establishing optimal convergence. In the numerical examples, the analysis is confirmed for different values of  $\alpha$.

\section*{Acknowledgments}
The first author was supported by the China Scholarship Council (No. 2011637083) and the Hunan Provincial Innovation Foundation for Postgraduates (No. CX2011B080). The second author was partially supported by the NSF DMS-1115416 and by OSD/AFOSR FA9550-09-1-0613.

\end{document}